\newcommand{\F}{\mathbb F}
\newcommand{\K}{\mathcal K}
\newcommand{\m}{\mathfrak m}
\newcommand{\N}{\mathbb N}
\newcommand{\cO}{\mathcal O}
\newcommand{\Q}{\mathbb Q}
\newcommand{\R}{\mathbb R}
\newcommand{\Z}{\mathbb Z}
\newcommand{\alg}{\text{alg}}
\DeclareMathOperator{\Aut}{Aut}
\DeclareMathOperator{\Char}{char}
\DeclareMathOperator{\res}{res}
\newcommand{\sep}{\text{sep}}
\newcommand{\Ldiv}{{\mathcal L_{\operatorname{div}}}}
\newcommand{\Lring}{{\mathcal L_{\operatorname{ring}}}}
\theoremstyle{plain}
\newtheorem{theorem}{Theorem}[section]
\newtheorem{lemma}[theorem]{Lemma}
\newtheorem{prop}[theorem]{Proposition}
\newtheorem{cor}[theorem]{Corollary}
\newtheorem{conj}[theorem]{Conjecture}
\newtheorem{fact}[theorem]{Fact}
\newtheorem{claim}[theorem]{Claim}
\theoremstyle{definition}
\newtheorem{defn}[theorem]{Definition}
\newtheorem{example}[theorem]{Example}
\newtheorem{quest}[theorem]{Question}
\begin{document}

\title{Immediately algebraically closed fields}
\author{Peter Sinclair}
\address{Peter Sinclair, Department of Mathematics, Douglas College, 700 Royal Ave, New Westminster, British Columbia V3M 5Z5, Canada}
\email{sinclapt@mcmaster.ca}
\keywords{Valued fields, immediately algebraically closed, superrosy, stable fields conjecture}

\begin{abstract}
    We consider two overlapping classes of fields, IAC and VAC, which are defined using valuation theory but which do not involve a distinguished valuation. Rather, each class is defined by a condition that quantifies over all possible valuations on the field. In his thesis, Hong asked whether these two classes are equal \cite[Question 5.6.8]{Hong13}. In this paper, we give an example that negatively answers Hong's question. We also explore several situations in which the equivalence does hold with an additional assumption, including the case where every $K'\equiv K$ is IAC.
\end{abstract}

\maketitle

\begin{center}
\begin{minipage}{.7\textwidth}
\setcounter{tocdepth}{1}
\tableofcontents
\end{minipage}
\end{center}

\section{Introduction}

The model theory of fields with a single distinguished valuation is well established. Fields with several distinguished valuations have also been considered, for example in \cite{Ers01}, \cite{John16}, and \cite{Mon17}. In this paper, we consider two overlapping classes of fields which are defined using valuation theory, but which do not involve any distinguished valuations; each class is instead defined by a condition that quantifies over all possible valuations on the field. From a model theoretic perspective, we consider both classes in the language of fields, rather than in a language of valued fields.

\begin{defn}
    We say that a field $K$ is \emph{immediately algebraically closed} (IAC) if, for every non-trivial valuation $v$ on $K$, $Kv$ is algebraically closed and $vK$ is divisible. We say that a field $K$ is \emph{valuationally algebraically closed} (VAC) if, for every non-trivial valuation $v$ on $K^\alg$, $K$ is dense in its algebraic closure with respect to $v$.
\end{defn}

These definitions are given in Hong's doctoral thesis \cite{Hong13}, where he suggested VAC in particular may be useful as an intermediate step in proving the stable field conjecture (this relationship is discussed further in Section \ref{52-stronglyiac}). They were independently considered in \cite{Krup15}, where it is shown that every superrosy field of positive characteristic is IAC.

Immediately algebraically closed fields also appear under purely algebraic assumptions. It is easy to see that every algebraically closed field is both VAC and IAC. In fact, this is also true for every separably closed field (Proposition \ref{separably}, below) and every pseudo-algebraically closed field \cite[Proposition 11.5.3]{FJ08}.

The main result of this paper is a partial answer to the following question posed by Hong:

\begin{quest}
    Suppose $K$ is an IAC field. Is $K$ also VAC?
\end{quest}

In Section \ref{counterexample}, we answer this question negatively by constructing a henselian subfield of $\F_p(t)^\alg$ that is IAC but is not dense in its algebraic closure with respect to any extension of its henselian valuation. On the other hand, Section \ref{algebra} discusses three algebraic conditions on an IAC field $K$ that imply $K$ is VAC:
\begin{itemize}
	\item $K$ has positive characteristic and no proper Artin-Schreier extensions (Theorem \ref{asc}),
	\item $K$ has characteristic zero and its multiplicative group is divisible (Theorem \ref{divisible}),
	\item $K$ is real closed and archimedean (Theorem \ref{realthm}).
\end{itemize}
Each of these conditions are sufficient, but we believe that they are all stronger than necessary. In the final Section \ref{52-stronglyiac}, we show that IAC and VAC are equivalent if they are considered as conditions on the theory of a field, rather than a particular model.

This paper is based on results from a chapter of the author's thesis \cite{Sin18}, under the supervision of Professor Deirdre Haskell.

\section{Basic Notions}
\subsection{Valuations}

We assume that the reader is familiar with the basic notions of valued fields. For more detail, refer to any textbook on valued fields, such as \cite{EP05}. Given a valuation $v$ on a field $K$, we denote the value group by $vK$, the residue field by $Kv$, the valuation ring by $\cO_v$, and the maximal ideal by $\m_v$.

When considering extensions of valued fields $L/K$, we will write $v$ for both the valuation on $L$ and its restriction to $K$. If $L/K$ is an algebraic extension then $vK$ and $vL$ will have the same divisible hull and $Lv$ will be an algebraic extension of $Kv$. In particular, if $L = K^\alg$ then $vL$ will be the divisible hull of $vK$ and $Lv$ will be the algebraic closure of $Kv$.

Recall that every valuation induces a topology generated by the basic open sets
\[ B(a,\gamma) = \{x\in K : v(x-a) > \gamma\} . \]
We say that a subset $A$ is dense in a field $K$ if for every $b\in K$ and $\gamma\in vK$, there exists $a\in A$ such that $a\in B(b,\gamma)$, or equivalently, $v(a-b)>\gamma$. We can similarly define Cauchy and convergent sequences:
\begin{itemize}
	\item A sequence $(a_\alpha)_{\alpha < \kappa}$ is Cauchy if for all $\gamma\in vK$ there exists $\beta<\kappa$ such that $\alpha,\alpha'\geq \beta$ implies $v(a_\alpha - a_{\alpha'}) > \gamma$.
	\item A sequence $(a_\alpha)_{\alpha < \kappa}$ converges to an element $b\in K$ if for all $\gamma\in vK$ there exists $\beta<\kappa$ such that $\alpha\geq \beta$ implies $v(a_\alpha - b) > \gamma$.
\end{itemize}
As with ordered fields, each valued field $(K,v)$ has a unique minimal extension in which every Cauchy sequence is convergent; we call this field the completion of $K$ with respect to $v$.

\begin{defn}
	Consider two valuations $v$ and $w$ on a field $K$. We say that $w$ is a coarsening of $v$ if $\cO_w \supseteq \cO_v$; in this case we also say that $\cO_w$ is a coarsening of $\cO_v$.
\end{defn}

There is a one-to-one order-preserving correspondence of the coarsenings of a valuation with the convex subgroups of the value group.

\begin{defn}
    A subgroup $\Delta$ of an ordered abelian group $\Gamma$ is said to be \emph{convex} if for every $a\in \Delta$, the interval $[-a,a] = \{x\in\Gamma : -a\leq x\leq a\}$ is a subset of $\Delta$. The convex subgroups of $\Gamma$ are linearly ordered by inclusion, and this order type is called the \emph{rank} of $\Gamma$. In particular, if $\Gamma$ is non-trivial and has no proper non-trivial convex subgroups then $\Gamma$ has rank 1, and is called \emph{archimedean}.
\end{defn}

Given a valued field $(K,v)$ and a convex subgroup $\Delta \leq vK$, we can define
\[ \cO_\Delta = v^{-1}(\Delta)\cup\cO = \{x\in K : v(x) \geq \delta\text{ for some }\delta\in\Delta\} .\]
Clearly, $\cO_\Delta \supseteq \cO$ is a valuation ring; it uniquely defines a valuation $w_\Delta: K^\times \to vK/\Delta$, which is a coarsening of $v$. Conversely, given a coarsening $w$ of a valuation $v$, the set
\[ \Delta_w = \{v(x) \in vK : x\in K \text{ and } w(x) = 0 \} \]
is a convex subgroup of $vK$. It is easy to check that this correspondence, when viewed between valuation rings and convex subgroups, is order-preserving. One immediate and useful consequence of this correspondence is that the coarsenings of a valuation ring are linearly ordered, just like the convex subgroups of the value group.

In general, fields have infinitely many possible valuations. In certain cases, this collection of valuations has enough structure that we can still describe the complete set of valuations.

\begin{example} \label{rationalvaluations}
	Let $K$ be any field equipped with the trivial valuation and consider the field $K(t)$ of rational functions over $K$. By Theorem 2.1.4 of \cite{EP05}, every non-trivial valuation on $K(t)$ is one of the following:
	\begin{itemize}
		\item The degree valuation $v_\infty: K(t) \to \Z$ defined by
		\[ v_\infty\left(\frac{f}{g}\right) = \deg(g) - \deg(f) \]
		for any polynomials $f,g\in K[t]$.
		\item An $f$-adic valuation $v_f: K(t)\to \Z$, which is defined in the same way as the $p$-adic valuation on $\Q$, using an irreducible polynomial $f\in K[t]$ in place of the prime $p$. To be precise, fix $f$ and let $r\in K[t]$. Then there exist polynomials $g, h\in K[t]$ such that $f$ does not divide either $g$ or $h$ and
        \[ r = f^n\left(\frac{g}{h}\right) \]
        for some $n\in\Z$. We define $v_f(r)$ to be this integer $n$.
	\end{itemize}

	Each of these valuations has value group $\Z$. The degree valuation has residue field isomorphic to $K$ and the residue field of an $f$-adic valuation is a finite extension of $K$, specifically $K[t]/(f)$.
	
	Now consider the particular case where $K$ is the algebraic closure of $\F_p$, the finite field with $p$ elements. Because non-trivial ordered abelian groups must be infinite, the only valuation on $\F_p$ is the trivial valuation $v(x) = 0$ for all $x$. Since algebraic extensions of a valued field cannot increase the rank of the value group, the trivial valuation is also the only valuation on $K$.
	
	Then by the above argument, the only valuations on $K(t)$ are the degree valuation and the $f$-adic valuations, all of which have value group $\Z$. Moreover, since $K$ is algebraically closed, each of these valuations has residue field isomorphic to $K$. We will revisit this example in Section \ref{counterexample}.
\end{example}

\subsection{IAC and VAC}

Recall the definition of IAC from the introduction:

\begin{defn}
    We say that a field $K$ is \emph{immediately algebraically closed} (IAC) if, for every non-trivial valuation $v$ on $K$, $Kv$ is algebraically closed and $vK$ is divisible.
\end{defn}

There are several other ways we could choose to define IAC:

\begin{prop} \label{equiv}
    The following are equivalent:
    \begin{enumerate}
        \item $K$ is IAC.
        \item For every non-trivial valuation $v$ on $K^\alg$, $vK$ is divisible, and if $a\in K^\alg$ with $v(a) = 0$ then there exists $b\in K$ with $v(b) = 0$ and $\res(a) = \res(b)$.
        \item For all $a\in K^\alg$ and every non-trivial valuation $v$ on $K^\alg$, there exists $b\in K$ with $v(a-b) > v(a)$.
        \item $K^\alg$ is an immediate extension of $K$ with respect to any non-trivial valuation.
    \end{enumerate}
\end{prop}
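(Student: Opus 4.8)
The plan is to treat condition~(4) as a hub and show that each of (1), (2), (3) is equivalent to it. The preliminary facts recalled above carry most of the weight: for any non-trivial valuation $v$ on $K^\alg$, the value group $vK^\alg$ is the divisible hull of $vK$ and the residue field $K^\alg v$ is the algebraic closure of $Kv$. Consequently ``$vK$ is divisible'' is equivalent to $vK = vK^\alg$, and ``$Kv$ is algebraically closed'' is equivalent to $Kv = K^\alg v$. Since an extension of valued fields is immediate precisely when neither the value group nor the residue field grows, condition~(4) is exactly the conjunction of these two statements, quantified over all non-trivial $v$ on $K^\alg$.

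For (1)$\iff$(4) I would invoke the extension theorem: every valuation on $K$ extends to $K^\alg$, and every valuation on $K^\alg$ restricts to $K$. Non-triviality is preserved in both directions, since $vK^\alg$ (the divisible hull of $vK$) is non-trivial if and only if $vK$ is. Thus the pairs of value group and residue field attached to the restrictions to $K$ are the same whether $v$ ranges over the non-trivial valuations of $K$ or of $K^\alg$, and (1) becomes precisely the conjunction identified above.

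For (2)$\iff$(4) the first clause of (2) is literally $vK = vK^\alg$. For the second clause, note that the residues of elements $a \in K^\alg$ with $v(a) = 0$ are exactly the nonzero elements of $K^\alg v$, while the residues of $b \in K$ with $v(b) = 0$ are exactly the nonzero elements of $Kv$; hence the clause asserts $(K^\alg v)^\times \subseteq (Kv)^\times$, which together with $Kv \subseteq K^\alg v$ forces $Kv = K^\alg v$. So (2) is again the required conjunction.

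The substantive step is (3)$\iff$(4), and here I would prove the classical approximation criterion: a valued extension $L/K$ is immediate if and only if for every $a \in L^\times$ there is $b \in K$ with $v(a - b) > v(a)$. For the forward direction, given $a$ choose $c \in K$ with $v(c) = v(a)$ (possible since $vL = vK$), lift $\res(a/c) \in Lv = Kv$ to some $d \in K$ with $v(d) = 0$, and set $b = cd$, so that $v(a/c - d) > 0$ yields $v(a - b) > v(a)$. For the reverse direction, $v(a - b) > v(a)$ forces $v(b) = v(a)$, giving $vL = vK$; applying this to elements of valuation $0$ shows every residue of $L$ already lies in $Kv$, so $Lv = Kv$. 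Taking $L = K^\alg$ identifies (3) with (4). The only caveat is the degenerate case $a = 0$, which is vacuous and may be excluded. I expect this lemma, rather than any of the translations, to be the crux, although it is standard.
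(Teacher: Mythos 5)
Your proof is correct and takes the same direct-from-definitions route the paper intends; the paper in fact gives no argument at all, stating only that ``these equivalences all follow directly from the definition of IAC.'' Your elaboration --- in particular the standard lemma that an algebraic extension is immediate if and only if every nonzero element $a$ admits $b\in K$ with $v(a-b)>v(a)$, together with the facts that $vK^\alg$ is the divisible hull of $vK$ and $K^\alg v$ the algebraic closure of $Kv$ --- correctly supplies everything the paper leaves to the reader.
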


These equivalences all follow directly from the definition of IAC. Condition (4) above explains the origin of the name \emph{immediately} algebraically closed. If one thinks of immediate extensions in terms of pseudo-convergent sequences (as in \cite{Kap42}), this tells us that an IAC field is ``pseudo-dense'' in its algebraic closure. Similarly, conditions (2) and (3) can be interpreted to say that an IAC field can approximate elements in the algebraic closure somewhat well. This leads us to the definition of VAC fields (repeated from the introduction) and suggests a possible equivalence between IAC and VAC.

\begin{defn}
    We say that a field $K$ is \emph{valuationally algebraically closed} (VAC) if, for every non-trivial valuation $v$ on $K^\alg$, $K$ is dense in its algebraic closure with respect to $v$.
\end{defn}

The following results are immediate from the definitions of IAC and VAC.

\begin{prop} \label{easy}
    Suppose $K$ is a field.
    \begin{enumerate}
        \item Every algebraic extension of an IAC field is IAC.
        \item Every algebraic extension of a VAC field is VAC.
        \item If $K$ is VAC then $K$ is IAC.
    \end{enumerate}
\end{prop}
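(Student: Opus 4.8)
**The plan is to prove each of the three parts directly from the definitions, using the behavior of valuations under algebraic extensions.**

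For part (1), the plan is to take an algebraic extension $L/K$ where $K$ is IAC, and verify that $L$ is IAC. First I would fix a non-trivial valuation $v$ on $L$ and restrict it to $K$; since $L/K$ is algebraic, $v|_K$ is also non-trivial (a trivial valuation cannot extend to a non-trivial one on an algebraic extension, as rank is preserved). Applying the IAC hypothesis to $K$ gives that $Kv$ is algebraically closed and $vK$ is divisible. The key observation, noted in the excerpt's discussion of algebraic extensions, is that $Lv$ is an algebraic extension of $Kv$ and the divisible hulls of $vK$ and $vL$ coincide. Since $Kv$ is algebraically closed, $Lv = Kv$ is algebraically closed, and since $vK$ is already divisible, $vL$ lies inside the divisible hull of $vK$, which equals $vK$, so $vL = vK$ is divisible. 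This establishes that $L$ is IAC.

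For part (2), the plan is analogous but phrased through the characterization via $K^\alg$. Let $L/K$ be algebraic with $K$ VAC; note that $L^\alg = K^\alg$ since $L/K$ is algebraic. Fix a non-trivial valuation $v$ on $L^\alg = K^\alg$. Because $K$ is VAC, $K$ is dense in $K^\alg$ with respect to $v$. The main step is to observe that density is inherited upward: since $K \subseteq L \subseteq L^\alg = K^\alg$ and $K$ is already dense in $K^\alg$, the intermediate field $L$ is a fortiori dense in $L^\alg$. Concretely, for any $b \in L^\alg$ and $\gamma \in vL^\alg$, density of $K$ gives $a \in K \subseteq L$ with $v(a-b) > \gamma$, which is exactly the density condition for $L$.

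For part (3), the plan is to show VAC implies IAC by using the equivalent formulation of IAC from Proposition \ref{equiv}, specifically condition (3): for all $a \in K^\alg$ and every non-trivial $v$ on $K^\alg$, there exists $b \in K$ with $v(a-b) > v(a)$. This is almost immediate from VAC: density of $K$ in $K^\alg$ lets us choose $b \in K$ with $v(a-b) > \gamma$ for any prescribed $\gamma$, in particular for $\gamma = v(a)$. This yields condition (3) of Proposition \ref{equiv} and hence $K$ is IAC. I expect the only point requiring care is confirming that divisibility of $vK$ (part of the definition of IAC) follows; but this is subsumed by condition (3) of Proposition \ref{equiv}, which the excerpt asserts is equivalent to IAC and already incorporates the value-group condition, so invoking that equivalence is what makes this direction clean.

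All three parts are expected to be routine consequences of the definitions together with the standard facts about algebraic extensions of valued fields recalled in the excerpt; the mildest subtlety, and the step I would flag as the main thing to get right, is the correct invocation of Proposition \ref{equiv} in part (3), ensuring the density condition is translated into the proposition's approximation condition rather than attempting to verify divisibility of the value group by hand.
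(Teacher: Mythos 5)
Your proof is correct and takes the same route the paper intends: the paper gives no written proof, stating only that these results are ``immediate from the definitions,'' and your argument simply fills in those routine details (restriction of a non-trivial valuation to an algebraic subextension stays non-trivial, algebraic closedness of the residue field and divisibility of the value group pass up, density of a subset passes to a superset, and density with $\gamma = v(a)$ yields condition (3) of Proposition \ref{equiv}). All steps check out, including the point you flag about divisibility being subsumed by the equivalence in Proposition \ref{equiv}.
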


We conclude this section by briefly discussing the relationship between VAC fields and separably closed fields, which we will return to at the end of Section \ref{52-stronglyiac}. First, we recall the continuity of roots in valued fields:

\begin{fact} \label{continuity}
	Let $(K,v)$ be a valued field, let $\alpha\in vK$, and let $f(X) = \sum_{i=0}^n a_iX^i$ be a polynomial in $K[X]$ with distinct roots $x_1,\ldots,x_n\in K$. Then there exists $\gamma\in vK$ such that for all $y_1,\ldots,y_n\in K$ with
	\[ g(X) = \prod_{i=1}^n (X-y_i) = \sum_{i=0}^n b_iX^i \]
	and $\min v(a_i-b_i) > \gamma$, for every $x_i$ there is at least one $y_j$ with $v(x_i-y_j)>\alpha$. Moreover, if $\alpha \geq v(x_i-x_j)$ for all $i\neq j$ then there exists exactly one $y_j$ with $v(x_i-y_j) > \alpha$.
\end{fact}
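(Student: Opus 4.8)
The plan is to exploit the fact that $x_1,\dots,x_n$ are roots of $f$ by evaluating the perturbed polynomial $g$ at each $x_i$. Since $f(x_i)=0$, we have $g(x_i)=g(x_i)-f(x_i)=\sum_{j=0}^n (b_j-a_j)x_i^j$, so the hypothesis $\min_j v(a_j-b_j)>\gamma$ forces $v(g(x_i))$ to be large. On the other hand, the factorization $g(X)=\prod_{j=1}^n(X-y_j)$ gives $v(g(x_i))=\sum_{j=1}^n v(x_i-y_j)$. Comparing these two expressions is the whole game: a large lower bound on $v(g(x_i))$ means the sum $\sum_j v(x_i-y_j)$ is large, which forces at least one summand $v(x_i-y_j)$ to exceed $\alpha$.

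To make $\gamma$ explicit, I would apply the ultrametric inequality to $\sum_{j=0}^n (b_j-a_j)x_i^j$ to get $v(g(x_i)) > \gamma + m_i$, where $m_i = \min_{0\le j\le n} j\,v(x_i) = \min(0, n\,v(x_i))$ accounts for the powers of $x_i$ (the minimum occurs at $j=0$ or $j=n$ according to the sign of $v(x_i)$). Setting $m=\min_i m_i\in vK$ and choosing $\gamma = n\alpha - m$, which again lies in $vK$, yields $v(g(x_i)) > \gamma + m_i \ge n\alpha$ for every $i$. If we had $v(x_i-y_j)\le\alpha$ for all $j$, then $\sum_j v(x_i-y_j)\le n\alpha$, contradicting $v(g(x_i)) = \sum_j v(x_i-y_j) > n\alpha$; hence at least one $y_j$ satisfies $v(x_i-y_j)>\alpha$, proving the first assertion. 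The degenerate case $g(x_i)=0$ is immediate, since then some $y_j=x_i$.

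For the ``moreover'' clause, I would bring in the separation hypothesis $\alpha\ge v(x_i-x_{i'})$ for $i\ne i'$. If a single $y_j$ were within $\alpha$ of two distinct roots, say $v(x_i-y_j)>\alpha$ and $v(x_{i'}-y_j)>\alpha$, then the ultrametric inequality would give $v(x_i-x_{i'})\ge\min(v(x_i-y_j),v(y_j-x_{i'}))>\alpha$, contradicting separation; so each $y_j$ is $\alpha$-close to at most one root. Combined with the first part, in which each of the $n$ roots is $\alpha$-close to at least one of the $n$ values $y_j$, a counting argument (the number of close pairs is both $\ge n$ and $\le n$) forces a perfect matching between the $x_i$ and the $y_j$, so there is exactly one close $y_j$ for each $x_i$.

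The routine care is all in the second paragraph: tracking the valuations of the powers $x_i^j$ so that $\gamma$ is genuinely an element of $vK$ and so that the strict inequality survives the comparison with $n\alpha$. The conceptual point, and the only step that uses the separation hypothesis, is the pigeonhole/matching argument of the final paragraph; even this is short once the first part is in hand, so I do not anticipate a serious obstacle.
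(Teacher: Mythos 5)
Your proof is correct. The paper does not prove this Fact itself but cites it as Theorem 2.4.7 of \cite{EP05}, and your argument is essentially the standard one given there: use $f(x_i)=0$ to bound $v(g(x_i))$ from below via the coefficient perturbation, compare with $v(g(x_i))=\sum_j v(x_i-y_j)$ from the factorization to get at least one close root, and then use the separation hypothesis plus the ultrametric inequality and a counting argument for uniqueness. The bookkeeping with $m_i=\min(0,n\,v(x_i))$ correctly handles the powers $x_i^j$ (including the degenerate cases $x_i=0$ and $g(x_i)=0$), so there is no gap.
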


This is stated and proved as Theorem 2.4.7 of \cite{EP05}. Continuity of roots essentially says that a small variation in the coefficients of a polynomial results in a small variation in its roots. This can be combined with Lemma 3.2.12 of \cite{EP05}, which essentially states that the set of separable polynomials is dense in the set of all polynomials, to prove the following:

\begin{prop} \label{separably}
	Suppose $K$ is a separably closed field. Then $K$ is both IAC and VAC.
\end{prop}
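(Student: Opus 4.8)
The plan is to prove the harder of the two conclusions first: since Proposition~\ref{easy}(3) tells us that VAC implies IAC, it suffices to show that every separably closed field is VAC. So let $K$ be separably closed, let $v$ be a non-trivial valuation on $K^\alg$, and let $a \in K^\alg$; I want to produce elements of $K$ arbitrarily $v$-close to $a$. The key point is that $a$ is algebraic over $K$, and because $K$ is separably closed, the minimal-type obstruction is purely inseparable: I can take a separable polynomial $f \in K[X]$ whose roots approximate the minimal (or merely annihilating) polynomial of $a$ as closely as I like. The separable polynomial $f$ then splits completely over $K$, so all of its roots already lie in $K$, and continuity of roots transfers the closeness of coefficients into closeness of a root to $a$.

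Concretely, I would first fix a target precision $\alpha \in vK^\alg$ and let $h(X) \in K^\alg[X]$ be a monic polynomial with $a$ among its distinct roots $x_1,\dots,x_n$ (for instance, the minimal polynomial of $a$ over $K$ if it is separable, or a suitable monic polynomial with $a$ as a simple root). Applying Fact~\ref{continuity} to $h$ and $\alpha$ yields a threshold $\gamma \in vK^\alg$ such that any monic $g(X) = \prod_{i=1}^n(X - y_i)$ whose coefficients are within $\gamma$ of those of $h$ has, for each root $x_i$ of $h$, some root $y_j$ with $v(x_i - y_j) > \alpha$. The task then reduces to finding such a $g$ that is \emph{separable over $K$} and \emph{splits over $K$}; its roots $y_j$ all lie in $K$, and the root $y_j$ approximating $a = x_i$ is the desired element.

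To manufacture $g$, I would invoke the density of separable polynomials (Lemma 3.2.12 of \cite{EP05}) to select a separable polynomial over $K$ whose coefficients lie within $\gamma$ of those of $h$. Since $K$ is separably closed, a separable polynomial over $K$ splits completely in $K$, so all its roots $y_j$ already belong to $K$. Matching up the degrees and rewriting this polynomial in the product form $\prod(X - y_i)$ puts it in the shape required by Fact~\ref{continuity}, and we conclude that some $y_j \in K$ satisfies $v(a - y_j) > \alpha$. As $\alpha$ was an arbitrary element of $vK^\alg$, this shows $K$ is dense in $K^\alg$ with respect to $v$, i.e.\ $K$ is VAC.

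The main obstacle I anticipate is bookkeeping about separability and splitting: I must ensure that the approximating polynomial supplied by the density lemma is genuinely separable over $K$ (so that separable-closedness forces it to split in $K$, rather than merely in some separable extension) and that it has the correct degree $n$ to be fed into the continuity statement. One must also be slightly careful that the hypothesis of distinct roots in Fact~\ref{continuity} is met by the chosen $h$, which is why working with the separable minimal polynomial of $a$ (or arranging $a$ to be a simple root) is convenient. Once these compatibility conditions are arranged, the argument is a direct chaining of the two cited results from \cite{EP05}, and the IAC conclusion then follows for free from Proposition~\ref{easy}(3).
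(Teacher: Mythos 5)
Your proposal is correct and follows essentially the same route as the paper's proof: approximate the minimal polynomial of $a$ by a separable polynomial over $K$ (via Lemma 3.2.12 of \cite{EP05}), use continuity of roots (Fact \ref{continuity}) to obtain a root in $K^{\sep}=K$ close to $a$, conclude VAC, and deduce IAC from Proposition \ref{easy}(3). Your extra care about the distinct-roots hypothesis of Fact \ref{continuity} when $a$ is inseparable is a sensible refinement, but the overall argument is the same.
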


\begin{proof}
	Fix $c\in K^\alg$, a valuation $v$ on $K^\alg$, and $\alpha \in vK^\alg$. Let $f(X) = \sum_{i=0}^n a_iX^i$ be the minimal polynomial for $c$ and choose $\gamma$ as in continuity of roots. By \cite[Lemma 3.2.12]{EP05}, we can find a separable polynomial $g(X) = \sum_{i=0}^n b_iX^i \in K[X]$ with $v(a_i-b_i) > \gamma$ for all $i<n$. Then by continuity of roots, $g(X)$ has a root $d$ with $v(c-d) > \alpha$, and since $K$ is separably closed, $d\in K$. Hence $K$ is dense in $K^\alg$ with respect to $v$, which means $K$ is VAC. The fact that $K$ is IAC then follows immediately from Proposition \ref{easy}.
\end{proof}

This result can also be seen as a consequence of \cite[Lemma 1.6.2]{Ers01}. Because we know that pseudo-algebraically closed fields are also VAC, the converse of this result is false. However, we can obtain a partial converse by adding henselianity as an assumption.

\begin{prop} \label{no-henselian}
    Suppose $K$ is a VAC field. If there exists a non-trivial henselian valuation $v$ on $K$ then $K$ is separably closed.
\end{prop}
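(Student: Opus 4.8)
The plan is to show that $K$ has no proper separable algebraic extension, i.e.\ that every $c\in K^{\sep}$ already lies in $K$; since $K$ is henselian I can establish this by approximating $c$ from within $K$ (using VAC) and then invoking Hensel's lemma. As $v$ is henselian it extends uniquely to a valuation on $K^{\alg}$, which I will also denote $v$, and because $K$ is VAC, $K$ is dense in $K^{\alg}$ with respect to this extension. So fix $c\in K^{\sep}$ and let $f\in K[X]$ be its monic minimal polynomial, of degree $n$. Separability of $f$ means $f'(c)\neq 0$, so that $\gamma_0 := v(f'(c))$ is an honest element of $vK^{\alg}$ rather than $\infty$; this finiteness is exactly the feature that drives the argument.

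Next I would record two elementary estimates for $b\in K$ approaching $c$. Since $f(c)=0$ we may write $f(X) = (X-c)h(X)$ with $h\in K(c)[X]$ and $h(c)=f'(c)$, so that $v(f(b)) = v(b-c) + v(h(b))$; because $h(b)\to f'(c)$ as $b\to c$, for $b$ sufficiently close we get $v(h(b))=\gamma_0$ and hence $v(f(b)) = v(b-c)+\gamma_0$. Similarly $f'(b)\to f'(c)$ forces $v(f'(b))=\gamma_0$ for $b$ close to $c$. By density I can therefore choose $b\in K$ for which both estimates are valid and $v(b-c)>\gamma_0$, which yields $v(f(b)) = v(b-c)+\gamma_0 > 2\gamma_0 = 2v(f'(b))$, precisely the hypothesis of Hensel's lemma. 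Applying Hensel's lemma over the henselian field $K$ then produces a root $a\in K$ of $f$. But $f$ is irreducible over $K$, so having a root in $K$ forces $n=1$ and $c=a\in K$. As $c$ was arbitrary, $K^{\sep}=K$ and $K$ is separably closed.

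The conceptual content here is essentially Krasner's lemma: density places $b$ well inside the Hensel basin around the simple root $c$, and separability (the finiteness of $\gamma_0$) is exactly what guarantees that a large $v(f(b))$ dominates $2v(f'(b))$. I expect the only real friction to be bookkeeping rather than ideas. The cleanest form of Hensel's lemma wants $f\in\cO_v[X]$ and $b\in\cO_v$, so I would first rescale $c$ by a suitable $\lambda\in K$ of large valuation to move all roots of $f$ into $\cO_v$ (possible since the non-trivial value group $vK$ is cofinal in its divisible hull), apply the argument to $\lambda c$, and then divide back. It is also worth confirming that the non-triviality of $v$ is genuinely used: it is, both to guarantee that $vK$ is unbounded so that $v(b-c)$ can be made to exceed $\gamma_0$, and so that the $v$-topology in which $K$ is dense is not discrete.
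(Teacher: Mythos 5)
Your argument is correct, but it takes a different route from the paper. The paper's proof is a two-line application of Krasner's lemma: since $K$ is dense in $K^{\alg}$ with respect to the unique extension of $v$, one picks $b\in K$ with $v(b-a)$ strictly larger than $v(a'-a)$ for every conjugate $a'$ of $a\in K^{\sep}$, and Krasner's lemma immediately gives $a\in K(b)=K$. You instead run the Newton--Hensel approximation: density puts $b\in K$ close enough to $c$ that $v(f(b))=v(b-c)+v(f'(c))>2v(f'(b))$, so the henselian hypothesis produces a root of the irreducible minimal polynomial $f$ inside $K$, forcing $\deg f=1$. Both proofs are sound, and as you note they are really the same phenomenon (Krasner's lemma for a henselian valuation is standardly deduced from exactly this kind of estimate); the trade-off is that your version needs only the Newton form of Hensel's lemma and is self-contained modulo the bookkeeping you flag (the factorization $f=(X-c)h$, the continuity estimates $v(h(b))=v(f'(b))=v(f'(c))$ for $b$ close to $c$, and the rescaling into $\cO_v$), whereas citing Krasner collapses all of that into one step. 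The one point worth making explicit in a written-up version is the continuity estimate: you should bound $v(h(b)-h(c))$ from below by $v(b-c)$ plus a constant depending on the coefficients of $h$ and a lower bound on $v(b)$, which is routine but is where the ``sufficiently close'' quantifier gets cashed out.
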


\begin{proof}
    Let $v$ also denote its unique extension to $K^\alg$, and fix $a\in K^\sep$. Since $K$ is VAC, there exists $b\in K$ with $v(b-a) > v(a'-a)$ for every conjugate $a'$ of $a$ over $K$. Then by Krasner's lemma (see \cite[Theorem 4.1.7]{EP05}), $a \in K(b) = K$.
\end{proof}

This proposition shows two interesting things. One, the class of VAC fields is in this sense orthogonal to the class of henselian fields. Two, even though VAC is on its surface a topological property, it does have significant algebraic consequences. As we will show in the following section, this orthogonality with henselianity does not hold for IAC fields.

\section{Counterexamples}
\label{counterexample}

Recall that a polynomial of the form $X^p-X-a$ with $p = \Char(K) > 0$ is called an Artin-Schreier polynomial and that a field extension $L/K$ is called an Artin-Schreier extension if $L$ is generated over $K$ by the root of an Artin-Schreier polynomial over $K$. Note that Artin-Schreier extensions are always Galois: they are clearly separable, and if $\theta$ is a root of an Artin-Schreier polynomial, then the full set of roots is $\{\theta,\theta+1,\ldots,\theta+p-1\}$.

The main result of this section is an example of a field of positive characteristic that is IAC but not VAC. In particular, we will construct a field that is IAC and henselian but that has a proper Artin-Schreier extension; the field is then not separably closed, and so cannot be VAC by Proposition \ref{no-henselian}. First we state the following fact, due to Quigley:

\begin{fact} \label{Quigley}
    \cite[Theorem 1]{Quig} Let $K$ be a field and fix $\alpha \in K^\alg$. Let $M$ be a subfield of $K^\alg$ that contains $K$ and is maximal with respect to the property $\alpha\notin M$. Then there exists a prime $q$ such that:
    \begin{enumerate}
        \item $[N:M]$ is a power of $q$ for every finite normal extension $N$ of $M$.
        \item Either $M$ is perfect or $K^\alg$ is a purely inseparable extension of $M$.
        \item $[M(\alpha):M] = q$ and $M(\alpha)$ is a normal extension of $M$.
        \item $M$ contains all $q$th roots of unity.
    \end{enumerate}
\end{fact}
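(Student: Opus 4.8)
The plan is to forget valuations entirely and reduce the statement to the structure theory of the absolute Galois group of $M$, reading off (1)--(4) from the fact that this group turns out to be procyclic pro-$q$. First I would record the single genuinely useful consequence of maximality: if $L$ is any field with $M \subsetneq L \subseteq K^\alg$, then $L$ must contain $\alpha$, for otherwise $L$ would be a subfield of $K^\alg$ containing $K$, avoiding $\alpha$, and strictly larger than $M$, contradicting maximality. Hence $M(\alpha) \subseteq L$ for every such $L$. In other words, $M(\alpha)$ is the \emph{unique minimal} proper extension of $M$ inside $K^\alg$: it is itself proper, and it is contained in every proper extension. Everything else is squeezed out of this one property.

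Next I would split on whether $M$ is perfect. Suppose $M$ is imperfect, so $\Char M = p > 0$ and some $a \in M$ has no $p$-th root in $M$. Then $M(a^{1/p})$ is a proper purely inseparable extension of degree $p$, so by the previous paragraph it contains, hence equals, $M(\alpha)$; thus $M(\alpha)/M$ is purely inseparable of degree $p$. Any proper separable extension of $M$ would likewise have to contain the purely inseparable field $M(\alpha)$, which is impossible, so $M$ has no proper separable extension, i.e.\ $M$ is separably closed and $K^\alg/M$ is purely inseparable. Setting $q = p$, conditions (1)--(4) are then immediate: every finite normal extension is purely inseparable of $p$-power degree, $M(\alpha)/M$ is purely inseparable of degree $p$ and so normal, and in characteristic $p$ the only $p$-th root of unity is $1 \in M$.

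In the remaining case $M$ is perfect, so $K^\alg = M^\sep$ and I would work with $G = \operatorname{Gal}(K^\alg/M)$. The subgroup $H = \operatorname{Gal}(K^\alg/M(\alpha))$ is open and proper, and the minimality above translates, under the (inclusion-reversing) Galois correspondence, into the statement that \emph{every} proper open subgroup of $G$ is contained in $H$. Hence $H$ is the unique maximal open subgroup, so $H = \Phi(G)$ is the Frattini subgroup and $G/H$ has no nontrivial proper subgroup, forcing $G/H \cong \Z/q\Z$ for a prime $q$; this yields (3), since $[M(\alpha):M] = q$ and $H$, being characteristic, is normal, so $M(\alpha)/M$ is Galois. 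Because $G/\Phi(G)$ is cyclic, $G$ is topologically generated by a single element, i.e.\ procyclic; writing a procyclic group as the product of its pro-$\ell$ parts and comparing Frattini quotients shows only the prime $q$ survives, so $G$ is pro-$q$. Condition (1) follows, as any finite normal $N/M$ is Galois with $\operatorname{Gal}(N/M)$ a finite quotient of the pro-$q$ group $G$, hence of $q$-power order, and (2) is just perfectness.

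It remains to verify (4) in the perfect case, which I expect to be the only step needing an idea beyond bookkeeping. Consider the cyclotomic extension $M(\zeta_q)/M$. If $\Char M = q$ then $\zeta_q = 1 \in M$; otherwise $M(\zeta_q)/M$ is Galois with group embedding into $(\Z/q\Z)^\times$, so its degree divides $q-1$, while simultaneously that degree is a power of $q$ as the order of a finite quotient of the pro-$q$ group $G$. Since $\gcd(q, q-1) = 1$, the extension is trivial and $\zeta_q \in M$. The main obstacle throughout is the profinite group theory—specifically the implication that a profinite group with a unique maximal open subgroup is procyclic pro-$q$—together with the care needed to keep the separable and inseparable situations apart, which the perfect/imperfect dichotomy is designed to do.
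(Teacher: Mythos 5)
The paper does not prove this statement at all: it is imported verbatim as a black box from Quigley's 1962 paper (the citation \cite[Theorem 1]{Quig}), so there is no in-paper argument to compare yours against. Judged on its own, your proof is correct and complete. The one observation doing all the work --- that $M(\alpha)$ is the unique minimal proper subextension of $K^\alg/M$ --- is exactly right, and both branches of the perfect/imperfect dichotomy are handled soundly: in the imperfect case the prime-degree purely inseparable extension $M(a^{1/p})$ forces $M(\alpha)=M(a^{1/p})$ and hence $M^\sep=M$, giving (1)--(4) with $q=p$; in the perfect case the translation to ``$\Gal(K^\alg/M)$ has a unique maximal open subgroup, hence is procyclic pro-$q$'' is a clean modern repackaging (via the Frattini subgroup) of what Quigley originally did with finite Galois theory and Sylow arguments, and the coprimality argument for $\zeta_q\in M$ is the standard one. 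The only points worth making explicit if you wrote this up formally are the two small facts you invoke implicitly: that a proper closed subgroup of a profinite group lies in some maximal open subgroup (needed for the Frattini generation argument), and that $X^p-a$ is irreducible over $M$ when $a$ has no $p$-th root, which is what gives $[M(a^{1/p}):M]=p$. Neither is a gap; both are standard.
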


\begin{example}
    Let $K$ be the algebraic closure of $\F_p$, the finite field with $p$ elements. As observed in Example \ref{rationalvaluations}, every non-trivial valuation on the field $K(t)$ of rational functions over $K$ is either the degree valuation $v_\infty$ or an $f$-adic valuation for some irreducible $f\in K[t]$. Let $(L,v)$ be a henselization of $(K(t), v_\infty)$.
	
	Let $\theta\in L^\alg = K(t)^\alg$ be a root of the Artin-Schreier polynomial $X^p-X-t^{-1}$. Since $L$ is henselian, there is a unique extension of $v$ to $L(\theta)$, which we also denote by $v$. Then we have $v(\theta) = -p^{-1}\notin \Z = v_\infty K(t)$; the henselization is always an immediate extension, so $vL = v_\infty K(t)$ and hence $\theta\notin L$.
	
    By a straightforward Zorn's Lemma argument, there is a subfield $M$ of $L^\alg$ which contains $L$ and is maximal with respect to the property $\theta\notin M$. This field is henselian, because it is an algebraic extension of a henselian field.
	
	\begin{center}
		\begin{tikzcd}
		& M(\theta) \arrow[dl,dash]\arrow[dr,dash,"p"] & \\
		L(\theta) \arrow[dr,dash,"p"] & & M\arrow[dl,dash] \\
		& L &
		\end{tikzcd}
	\end{center}	
	
	By Fact \ref{Quigley}, since $M(\theta)$ is a separable extension of $M$, $M$ is perfect. Moreover, the prime $q$ in Fact \ref{Quigley} is $p = \Char(K)$ because $[M(\theta):M] =p$, and hence $[N:M]$ is a power of $p$ for every finite normal extension $N$ of $M$.
	
	\begin{claim}
		The multiplicative group $M^\times$ is divisible.
	\end{claim}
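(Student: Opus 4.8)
The plan is to show that $M^\times$ is $q$-divisible for every prime $q$, since an abelian group is divisible precisely when it is $q$-divisible for each prime. I would treat the primes $q = p$ and $q \neq p$ separately, and I expect the latter to carry all the weight.

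For $q = p = \Char(K)$ the work is already done: we have shown that $M$ is perfect, so the Frobenius map $x \mapsto x^p$ is surjective on $M$, and in particular every element of $M^\times$ admits a $p$-th root in $M$. Hence $M^\times$ is $p$-divisible.

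For a prime $q \neq p$ I would argue by contradiction. Suppose some $a \in M^\times$ is not a $q$-th power in $M$. The key observation is that $M \supseteq L \supseteq K = \overline{\F_p}$, and $\overline{\F_p}$ contains every root of unity of order coprime to $p$; in particular $M$ contains a primitive $q$-th root of unity $\zeta$. Fix a root $\beta$ of $X^q - a$ in $M^\alg$. Then the full set of roots is $\{\zeta^i\beta : 0 \le i < q\} \subseteq M(\beta)$, so $M(\beta)$ is the splitting field of $X^q - a$ over $M$ and is therefore normal over $M$. Because $\zeta \in M$, standard Kummer theory (equivalently, the irreducibility criterion for binomials) gives $[M(\beta):M] = q$, using that $a \notin (M^\times)^q$ and that $q$ is prime. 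Thus $M(\beta)/M$ is a finite normal extension of degree $q$. But we have already established, via Fact \ref{Quigley}, that every finite normal extension of $M$ has degree a power of $p$; since $q \neq p$, this is a contradiction. Hence every $a \in M^\times$ has a $q$-th root in $M$.

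Combining the two cases, $M^\times$ is $q$-divisible for all primes $q$, and therefore divisible. The crux is the case $q \neq p$: the roots of unity inherited from $\overline{\F_p}$ force a hypothetical degree-$q$ root extension to be \emph{normal}, which is exactly what the $p$-power constraint on normal extensions of $M$ forbids. The point requiring the most care is confirming that $[M(\beta):M]$ equals $q$ rather than a proper divisor, and this is precisely where the presence of $\zeta$ is essential, since it makes the extension a genuine cyclic (Kummer) extension of prime degree.
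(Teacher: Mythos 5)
Your proof is correct and follows essentially the same route as the paper: $p$-divisibility comes from perfectness of $M$, and $q$-divisibility for $q \neq p$ comes from pitting the degree-$q$ binomial extension (irreducible since $a$ is not a $q$-th power) against the fact that every finite normal extension of $M$ has degree a power of $p$. The only cosmetic difference is that you invoke the $q$-th roots of unity in $\overline{\F_p} \subseteq M$ to make $M(\beta)$ itself normal, whereas the paper simply passes to the splitting field of the binomial and notes that $q = [M(\beta):M]$ must divide its $p$-power degree.
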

	
	\begin{proof}
		If $c\in M^\alg$ with $c^p\in M$ then, because $M$ is perfect, we must have $c\in M$. On the other hand, suppose $c\in M^\alg$ with $c^q\in M$ for some prime $q\neq p$. If $c\notin M$ then the splitting field of $X-c^q$ is a finite normal extension, so its degree is a power of $p$, again by Fact 3.1. But $[M(c):M]=q$ does not divide any power of $p$. By contradiction, we must have $c\in M$, and hence $M^\times$ is divisible.
	\end{proof}
	
	It follows immediately from the claim that for any non-trivial valuation $w$ on $M$, the value group is divisible. Moreover, as observed in Example \ref{rationalvaluations}, the residue field of $K(t)$ with respect to any valuation is isomorphic to $K$. Since $M$ is an algebraic extension of $K(t)$, it follows that the residue field $Mw$ for any valuation $w$ on $M$ will also be isomorphic to $K$. Thus, $M$ is IAC and henselian. But $M$ is not separably closed because $M(\theta)$ is a proper separable extension, and hence $M$ cannot be VAC by Proposition \ref{no-henselian}.
\end{example}

We could have concluded that $M$ is not VAC without Proposition \ref{no-henselian} by instead using the fact that there exists a valuation on $M$ that extends uniquely to an Artin-Schreier extension of $M$. Recall the definition of the defect of a valued field extension:

\begin{defn}
    Let $N/K$ be a Galois extension, and fix a valuation $v$ on $N$. Let $e = [vN:vK]$ and $f = [Nv:Kv]$, and let $r$ be the number of distinct valuations $v'$ on $N$ with $v'|_K = v$. The \emph{defect} of $(N,v)/(K,v)$ is the positive integer
    \[ d = \frac{[N:K]}{ref} .\]
    The extension $(N,v)/(K,v)$ is called a \emph{defect extension} if $d>1$, and \emph{defectless} if $d=1$. See Section 3.3 of \cite{EP05} for more details.
\end{defn}

The defect plays an important role in the Galois theory of valued fields, but in the case of Artin-Schreier extensions of IAC fields, it simply measures whether the valuation extends uniquely. More precisely, if $L$ is a proper Artin-Schreier extension of an IAC field $K$ of characteristic $p>0$ then $[L:K]=p$ and $e=f=1$, so $r=1$ if and only if $d\neq 1$.

\begin{prop}\label{no-asc}
    Suppose $K$ is a VAC field of positive characteristic, and fix a valuation $v$ on $K$. Then $(K,v)$ has no Artin-Schreier defect extensions.
\end{prop}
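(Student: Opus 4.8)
The plan is to reduce to the characterization recorded in the remark preceding the statement, and then run a Krasner-type argument in which the defect hypothesis supplies the Galois-invariance that henselianity would normally provide. Since $K$ is VAC it is IAC by Proposition \ref{easy}, so for any nontrivial valuation $v$ and any proper Artin-Schreier extension $L = K(\theta)$ (with $\theta^p - \theta = a \in K$ and $p = \Char K$) we have $[L:K] = p$ and $e = f = 1$; hence $rd = p$ and, as noted before the statement, the extension is a defect extension precisely when $r = 1$, i.e. precisely when $v$ extends uniquely to $L$. For the trivial valuation there is nothing to prove, since it always extends uniquely with $f = [L:K]$ and so $d = 1$. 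It therefore suffices to show that $v$ does \emph{not} extend uniquely to $L$, and I would assume for contradiction that it does.

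Two observations drive the argument. First, because $L/K$ is Artin-Schreier the conjugates of $\theta$ over $K$ are exactly $\theta, \theta+1, \ldots, \theta+(p-1)$, and for $i \in \{1,\ldots,p-1\}$ the element $i$ is a nonzero element of the prime field $\F_p \subseteq K$, hence a root of unity, so $v(i) = 0$; thus $v(\theta - \sigma\theta) = 0$ for every nontrivial $\sigma \in \operatorname{Gal}(L/K)$. Second, since $L/K$ is Galois, $\operatorname{Gal}(L/K)$ acts transitively on the extensions of $v$ to $L$, so the assumption that the extension is unique means exactly that $v$ is $\operatorname{Gal}(L/K)$-invariant, i.e. $v(\sigma x) = v(x)$ for all $x \in L$.

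With these in hand I would fix any extension of $v$ to $K^\alg$; it restricts to the unique extension on $L$, and $vK^\alg = vK$ since $K$ is IAC. Because $K$ is VAC, $K$ is dense in $K^\alg$, so taking $\gamma = 0 \in vK$ there is some $c \in K$ with $v(\theta - c) > 0$. For nontrivial $\sigma \in \operatorname{Gal}(L/K)$ we have $\sigma c = c$, so invariance gives $v(\sigma\theta - c) = v(\sigma(\theta - c)) = v(\theta - c) > 0$, and then the ultrametric inequality yields $v(\theta - \sigma\theta) \geq \min\{v(\theta - c),\, v(\sigma\theta - c)\} > 0$. This contradicts the first observation that $v(\theta - \sigma\theta) = 0$. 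Hence no nontrivial $\sigma$ exists, contradicting $[L:K] = p$, and so $v$ cannot extend uniquely; that is, $(K,v)$ has no Artin-Schreier defect extension.

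The main obstacle is that VAC fields are generally not henselian — indeed Proposition \ref{no-henselian} shows a henselian VAC field is separably closed — so Krasner's lemma is not directly available in the form used there. The key point that removes the obstacle is recognizing that the defect hypothesis $r = 1$ is itself precisely the $\operatorname{Gal}(L/K)$-invariance of $v$ on $L$ needed to push the Krasner computation through; the remaining ingredients are only the elementary Artin-Schreier fact that conjugate differences are units (value $0$) and the approximation within value $0$ supplied by VAC density.
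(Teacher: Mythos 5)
Your proof is correct, and it takes a genuinely different route from the paper's. Both arguments contradict density, but the paper obtains the key non-approximability statement by citing Corollary 2.30 of Kuhlmann's work on Artin-Schreier defect extensions, which gives the strong conclusion that $v(\theta-c)<0$ for \emph{all} $c\in K$, whence $K$ is not dense in $K(\theta)$. You instead prove from scratch the weaker bound $v(\theta-c)\le 0$, which suffices: the defect hypothesis forces $r=1$, uniqueness of the extension of $v$ to $L=K(\theta)$ is equivalent to $\operatorname{Gal}(L/K)$-invariance of $v$ by the conjugation theorem, and that invariance is exactly what lets the Krasner-style estimate $v(\theta-\sigma\theta)\ge\min\{v(\theta-c),v(\sigma\theta-c)\}$ go through in the absence of henselianity, colliding with the elementary fact that $v(\theta-\sigma\theta)=v(i)=0$ for the conjugates $\sigma\theta=\theta+i$, $i\in\F_p^\times$. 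Your version buys self-containedness and makes transparent precisely which feature of defect extensions is used (only $r=1$); the paper's citation buys brevity and the sharper inequality, which is not needed here. One small simplification: your appeal to IAC to get $e=f=1$ and the full equivalence ``defect $\Leftrightarrow$ $r=1$'' is more than necessary, since the direction you actually use (defect $\Rightarrow$ unique extension) follows for any Galois extension of prime degree from $d>1$ forcing $ref=1$. Your separate treatment of the trivial valuation, which the definition of VAC cannot reach, is a point the paper leaves implicit and is handled correctly.
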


\begin{proof}
    Suppose $L$ is an Artin-Schreier defect extension of $(K,v)$ and $\theta$ is the root of an Artin-Schreier polynomial with $L = K(\theta)$; then there is a unique extension of $v$ to $L$, which we also denote by $v$. By Corollary 2.30 of \cite{Kuhl10}, $v(\theta-c) < 0$ for all $c\in K$, which means $K$ is not dense in $K(\theta)$. But $K$ is dense in $K^\alg$, which means it must be dense in every algebraic extension of $K$; by contradiction, no such $L$ can exist.
\end{proof}

\begin{example}
	Consider $M$ and $\theta$ from the previous example, and let $v$ be a non-trivial henselian valuation on $M$. Since $v$ extends uniquely to $M(\theta)$, $M(\theta)$ is an Artin-Schreier defect extension of $(M,v)$. Thus, we also could have concluded that $M$ is not VAC using Proposition \ref{no-asc}.
\end{example}

It is not known whether Propositions \ref{no-henselian} and \ref{no-asc} can fail in an IAC field of positive characteristic independently of each other; that is, whether there exists a non-henselian IAC field with an Artin-Schreier defect extension or a henselian IAC field without an Artin-Schreier defect extension. However, in Section \ref{positive}, we will show that eliminating all Artin-Schreier extensions (with or without defect) is enough to ensure the equivalence of IAC and VAC.

In a private communication, Philip Dittmann pointed out to the author that it is also possible to construct an IAC field of characteristic zero that is not VAC, as described in the following example.

\begin{example}
	Let $(\Q_p,v_p)$ be the $p$-adic numbers equipped with the standard $p$-adic valuation and consider an algebraic extension $(K,v)$ of $(\Q_p,v_p)$ such that $Kv$ is algebraically closed, $vK$ is divisible, and $K$ is not algebraically closed. One example of such a field is the maximal tamely ramified extension of $\Q_p$, which is not algebraically closed by Theorem 2(ii) of \cite{Iwa55}.
	
	
	
	Let $w$ be any valuation on $K$ and consider the coarsening $\cO_v\cO_w$ of $\cO_v$. Since $(K,v)$ has rank 1, $\cO_v$ has no non-trivial proper coarsenings; hence $\cO_v\cO_w$ must equal $\cO_v$ or $K$.
		
	If $\cO_v\cO_w = \cO_v$ then $\cO_w\subseteq \cO_v$; in other words, $v$ is a coarsening of $w$. Then there must be a convex subgroup $\Delta$ of $wK$ with $vK\cong wK/\Delta$; moreover, $w$ induces a valuation on $Kv$ with value group $\Delta$. Since $Kv$ is an algebraic extension of a finite field, $\Delta$ must be trivial, and so $v$ and $w$ are equivalent.
	
	On the other hand, if $\cO_v\cO_w = K$ then $v$ and $w$ are independent valuations. Let $(K^h,w^h)$ be the henselization of $(K,w)$. Then $K^h$ has two independent henselian valuations: $w^h$ and the unique extension of $v$ to $K^h$. Because $K$ has characteristic zero, it follows form \cite[Theorem 4.4.1]{EP05} that $K^h$ is algebraically closed, and so $Kw = K^hw^h$ is algebraically closed and $wK = w^hK^h$ is divisible.
	
	Thus, $K$ is IAC. But $(K,v)$ is henselian and $K$ is not separably closed, so by Proposition \ref{no-henselian}, $K$ is not VAC.
\end{example}	

\section{Algebraic Conditions}
\label{algebra}

In this section, we provide sufficient algebraic conditions to deduce that an IAC field is VAC. Before we prove the main results of the section, we establish a criterion for verifying that a field is VAC.

\subsection{A VAC Criterion} Recall that every valued field has a unique minimal extension in which every Cauchy sequence is convergent, called its completion. We begin by establishing some relationships between completions and algebraic closures.

\begin{prop} \label{prop-0.1}
	Suppose $(K,v)$ is a valued field. The following are equivalent:
	\begin{enumerate}
		\item The completion $(L,w)$ of $(K,v)$ is separably closed.
		\item The completion $(L,w)$ of $(K,v)$ is algebraically closed.
		\item There exists a valuation $w$ on $K^\alg$ which extends $v$ such that $(K,v)$ is dense in $(K^\alg,w)$.
		\item For every valuation $w$ on $K^\alg$ which extends $v$, $(K,v)$ is dense in $(K^\alg,w)$.
		\item For every non-constant separable $f(X)\in K[X]$ and $\gamma\in vK$, there is $x\in K$ such that $v(f(x)) > \gamma$.
	\end{enumerate}
\end{prop}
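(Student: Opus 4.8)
The plan is to prove all five conditions equivalent by running the cycle $(2)\Rightarrow(4)\Rightarrow(3)\Rightarrow(5)\Rightarrow(1)\Rightarrow(2)$, noting that $(2)\Rightarrow(1)$ is trivial. Write $L=\hat K$ for the completion. Two facts will be used repeatedly: that every valuation on $K$ extends to $K^\alg$ and that any two such extensions are conjugate under $\Aut(K^\alg/K)$ (standard valued-field theory), and continuity of roots (Fact \ref{continuity}). For $(2)\Rightarrow(4)$, since $L$ is algebraically closed and contains $K$ it contains a copy of $K^\alg$, on which the valuation of $L$ restricts to some extension $w'$ of $v$; density of $K$ in $L$ then restricts to density of $K$ in $(K^\alg,w')$. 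For an arbitrary extension $w$ I would write $w'=w\circ\sigma$ with $\sigma\in\Aut(K^\alg/K)$; because $\sigma$ fixes $K$ pointwise, $w'(x-c)=w(x-\sigma c)$ for $x\in K$, so density for $w'$ transports to density for $w$, giving $(4)$. Then $(4)\Rightarrow(3)$ is immediate since $v$ always extends.

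For $(3)\Rightarrow(5)$, take the dense extension $w$, a non-constant separable $f=a\prod_{i=1}^n(X-c_i)\in K[X]$ with distinct roots $c_i\in K^\alg$, and $\gamma\in vK$. Fixing the root $c_1$, I would use density to choose $x\in K$ with $w(x-c_1)$ exceeding every $w(c_1-c_i)$ as well as $\gamma-w(a)-\sum_{i\geq2}w(c_1-c_i)$. For such $x$ one has $w(x-c_i)=w(c_1-c_i)$ for $i\geq2$, hence $w(f(x))=w(a)+\sum_{i\geq2}w(c_1-c_i)+w(x-c_1)>\gamma$; since $f(x)\in K$ this reads $v(f(x))>\gamma$.

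For $(5)\Rightarrow(1)$, fix a non-constant separable $f$ with distinct roots $c_1,\dots,c_n\in K^\alg$, leading coefficient $a$, and an extension $w$ of $v$. By $(5)$ pick $x_k\in K$ with $v(f(x_k))\to\infty$. Because $w(f(x_k))=w(a)+\sum_i w(x_k-c_i)$ and a finite sum of terms all bounded above by some $\gamma_0$ would force $w(f(x_k))\leq w(a)+n\gamma_0$, the hypothesis forces $\max_i w(x_k-c_i)\to\infty$; by pigeonhole some fixed root $c_{i_0}$ satisfies $w(x_k-c_{i_0})\to\infty$ along a subsequence. That subsequence is $v$-Cauchy in $K$, hence converges to some $\ell\in L$, and continuity of the polynomial map gives $f(\ell)=\lim_k f(x_k)=0$. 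Thus every non-constant separable polynomial has a root in $L$, so $L$ is separably closed.

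The remaining implication $(1)\Rightarrow(2)$ is the \textbf{main obstacle}, as it must bridge separably closed to algebraically closed for the completion. In characteristic $0$ there is nothing to prove, so suppose $\Char K=p>0$; it then suffices to show $L$ is perfect. Given $a\in L$, I would deform $X^p-a$ into the separable polynomials $h_s(X)=X^p-sX-a$ (separable since $h_s'=-s\neq0$), which tend to $X^p-a$ as $v(s)\to\infty$. Since $L$ is separably closed each $h_s$ splits in $L$, and choosing a root $b_s$ of maximal value gives $p\,v(b_s)\geq\sum(\text{values of roots of }h_s)=v(a)$, so $v(b_s)$ is bounded below; consequently $v(sb_s)\to\infty$ and $b_s^p=a+sb_s\to a$. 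As Frobenius is a homeomorphism of $L$ onto $L^p$ and $L$ is complete, $L^p$ is complete and hence closed, so $a\in\overline{L^p}=L^p$. Therefore $L$ is perfect, hence algebraically closed, and $(2)\Rightarrow(1)$ closes the cycle. The delicate points to watch are selecting a value-bounded branch $b_s$ of the separating deformations and justifying that $L^p$ is closed in $L$.
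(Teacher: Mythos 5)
Your cycle $(2)\Rightarrow(4)\Rightarrow(3)\Rightarrow(5)\Rightarrow(1)\Rightarrow(2)$ is a legitimate reorganization, and the steps $(2)\Rightarrow(4)$, $(4)\Rightarrow(3)$ and $(3)\Rightarrow(5)$ coincide in substance with the paper's arguments (conjugation of extensions of $v$, and the valuation computation $w(x-c_i)=w(c_1-c_i)$ for $i\geq 2$). Your $(1)\Rightarrow(2)$ is genuinely different from the paper, which simply invokes Proposition \ref{separably} (a separably closed field is dense in its algebraic closure) together with completeness of $L$; your direct perfectness argument via the separable deformations $X^p-sX-a$, the lower bound $v(b_s)\geq v(a)/p$ from the product of the roots, and the closedness of $L^p$ in the complete field $L$ is correct and self-contained.

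The genuine gap is in $(5)\Rightarrow(1)$. You only treat separable polynomials $f$ with coefficients in $K$: your roots $c_i$ lie in $K^\alg$, and you apply (5), which is a statement about $K[X]$, directly to $f$. What you actually prove is that every non-constant separable $f\in K[X]$ has a root in $L$. That does not yield that $L$ is separably closed, because $L$ is in general a transcendental extension of $K$ (think of $\Q_p/\Q$), and a field containing a root of every separable polynomial over a subfield need not itself be separably closed (for instance $\Q^\alg(t)$ over $\Q$). The missing step --- which is exactly where the paper's proof of $(5)\to(1)$ does its work --- is to start from an arbitrary separable $f\in L[X]$, use the density of $K$ in its completion $L$ to replace $f$ by a polynomial $g\in K[X]$ with coefficients close to those of $f$ (such a $g$ is automatically separable once the coefficients are close enough, since nonvanishing of the discriminant is an open condition), apply (5) to $g$, and then use continuity of roots (Fact \ref{continuity}) to convert a near-root of $g$ into a near-root of $f$, after which your pigeonhole and Cauchy-subsequence argument applies verbatim. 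Without this approximation step the implication does not close.
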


\begin{proof}
    $(1)\to (2)$: Every separably closed field is VAC by Proposition \ref{separably}. But then $(L,w)$ is dense in its algebraic closure and complete, so $L = L^\alg$.
	
	$(2)\to (3)$: If $L$ is algebraically closed then by the universal property of algebraic closures, we can identify $K^\alg$ with a subfield of $L$ containing $K$. The restriction of $w$ to $K^\alg$ extends $v$ and $(K,v)$ is dense in $(K^\alg,w) \subseteq (L,w)$.
	
	$(3)\to (4)$: Suppose $w$ is an extension of $v$ to $K^\alg$ such that $(K,v)$ is dense in $(K^\alg,w)$, and let $w'$ be some other extension of $v$ to $K^\alg$. Then by the Conjugation Theorem \cite[3.2.15]{EP05}, there exists $\sigma\in \Aut(K^\alg/K)$ which induces a valued field isomorphism $(K^\alg,w)\to (K^\alg,w')$ over $(K,v)$. Thus $(K,v)$ is also dense in $(K^\alg,w')$.
	
	
	$(4)\to (5)$: Fix a non-constant separable polynomial $f(X) \in K[X]$ and $\gamma\in vK$. Let $b_0,\ldots,b_n\in K^\alg$ be the roots $f(X)$ and let $\delta = \min_{i\neq 0}\{v(b_0-b_i)\}$. Without loss of generality, assume that $\gamma > v(b_0-b_i) + n\delta$ for all $1\leq i\leq n$. Since $K$ is dense in $K^\alg$, there exists $x\in K$ with $v(x-b_0) > \gamma - n\delta > v(b_0-b_i)$ for all $1\leq i\leq n$. Then $v(x-b_i) = v(b_0-b_i) \geq \delta$ for all $1\leq i\leq n$ and
	\[ v(f(x)) = v(x-b_0) + v(x-b_1) + \ldots + v(x-b_n) > (\gamma-n\delta) + \delta + \ldots +\delta = \gamma \]
	as desired.
	
	
	$(5)\to(1)$: Let $(L,v)$ be the completion of $(K,v)$; we will work in the algebraic closure $(L^\alg,v)$. Fix a cofinal sequence $(\gamma_\alpha)_{\alpha<\kappa}$ in $vK=vL$ and let $f(X)\in L[X]$ be a separable polynomial of degree $n>0$. We will show that $f(X)$ has a root in $L$ by finding a convergent sequence in $K$.
	
	By continuity of roots (Fact \ref{continuity}), for each $\gamma_\alpha$ there exists $\delta_\alpha\in vK$ such that for any $g(X)\in L[X]$ with $\deg(g) = n$, if the coefficients of $g$ and $f$ are within $\delta_\alpha$ of each other then every root of $g$ is within $\gamma_\alpha$ of a root of $f$. Since $K$ is dense in $L$, we can choose a separable polynomial $g_\alpha(X)\in K[X]$ with $\deg(g_\alpha) = n$ and the coefficients of $g_\alpha$ and $f$ within $\delta_\alpha$ of each other.
	
	Because $g_\alpha(X)$ is separable, by (5) there is $x_\alpha\in K$ such that $g_\alpha(x_\alpha) > n\gamma_\alpha$. Then by the pigeonhole principle, $v(x_\alpha-b) > \gamma_\alpha$ for some $b\in L^\alg$ with $g_\alpha(b) = 0$. By choice of $g_\alpha$, we also have $v(b-c) > \gamma_\alpha$ for some $c\in L^\alg$ with $f(c) = 0$. But then
	\[ v(x_\alpha-c) \geq \min\{v(x_\alpha-b), v(b-c)\} > \gamma_\alpha .\]
	While it is not necessarily true that each $x_\alpha$ approximates the same root $c$, by the pigeonhole principle again, there is a cofinal subsequence of $(x_\alpha)_{\alpha<\kappa}$ and a single root $d$ of $f(X)$ such that each $x_\alpha$ in the subsequence satisfies $v(x_\alpha-d)>\gamma_\alpha$. Then this subsequence converges to $d$, which means $d\in L$, and hence $L$ is separably closed.
\end{proof}

The equivalence of (3) and (4) in the proposition shows us that for a valued field $(K,v)$, density in the algebraic closure depends only on $v$ and not on its extension to $K^\alg$. Thus, we will frequently say that $K$ is dense in $K^\alg$ with respect to $v$, rather than the more precise ``with respect to an extension of $v$ to $K^\alg$.''

Consider a field $K$ with two valuations $v$ and $w$, and recall that $\cO_w$ is called a coarsening of $\cO_v$ if $\cO_v\subseteq\cO_w$. In this case, $v$ and $w$ are dependent valuations, and so by Theorem 2.3.4 of \cite{EP05}, they induce the same topology. Moreover, the correspondence between coarsenings of $\cO_v$ and convex subgroups of $vK$ implies that the set of coarsenings of $\cO_v$ is linearly ordered by inclusion. Determining whether an IAC field is VAC depends only on the valuations that have a maximum non-trivial coarsening, as the following lemma shows.

\begin{lemma} \label{nomax}
    Suppose $(K,v)$ is a valued field such that $\cO_v$ has no maximum non-trivial coarsening and such that every non-trivial coarsening of $\cO_v$ has algebraically closed residue field. Then $K$ is dense in $K^\alg$ with respect to $v$.
\end{lemma}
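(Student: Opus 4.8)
We want to show that $K$ is dense in $K^\alg$ with respect to $v$. By the equivalence of conditions (3) and (5) in Proposition \ref{prop-0.1}, it suffices to verify condition (5): for every non-constant separable $f(X)\in K[X]$ and every $\gamma\in vK$, there exists $x\in K$ with $v(f(x)) > \gamma$. The plan is to exploit the hypothesis that $\cO_v$ has no \emph{maximum} non-trivial coarsening, which means the convex subgroups of $vK$ are cofinally rich: for any $\gamma\in vK$ we can find a proper non-trivial convex subgroup $\Delta < vK$ whose coarsening $w_\Delta$ sees $\gamma$ as ``small,'' i.e. $\gamma\in\Delta$ (equivalently $w_\Delta(\gamma\text{-level stuff}) = 0$). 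The key leverage is that every such coarsening $w_\Delta$ has algebraically closed residue field.

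**The main argument.**

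Fix $f(X)\in K[X]$ separable with roots $b_1,\ldots,b_n\in K^\alg$, and fix $\gamma\in vK$. First I would choose a non-trivial proper coarsening $w = w_\Delta$ of $v$, corresponding to a convex subgroup $\Delta\leq vK$, chosen large enough that $\gamma\in\Delta$ and also large enough that $v(b_i-b_j)\in\Delta$ for all $i\neq j$ (possible because there are finitely many roots and, crucially, because $\cO_v$ has no maximum non-trivial coarsening, so we can always pass to a strictly larger convex subgroup than any given finite collection of elements demands). Now the residue field $Kw$ is algebraically closed by hypothesis. The image $\bar f$ of $f$ under the coarsening, or more precisely the behaviour of $f$ over $(Kw, \bar v)$ where $\bar v$ is the induced rank-refinement, is controlled: because $Kw$ is algebraically closed, we can approximate a chosen root $b_1$ modulo $w$ by an element of $K$. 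Concretely, I would find $x\in K$ with $w(x - b_1) > 0$, i.e. $x$ and $b_1$ have the same $w$-residue after lifting; since $Kw = (K^\alg)w$ (algebraically closed residue field forces the residue extension to be trivial), such an $x$ exists.

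**Estimating $v(f(x))$.**

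With $x$ chosen so that $w(x-b_1)>0$ while $v(x-b_1)$ can be made as large as we like within the $\Delta$-coarsening framework, I would estimate $v(f(x)) = \sum_{i=1}^n v(x-b_i)$. For $i\neq 1$, since $v(b_1-b_i)\in\Delta$ and $v(x-b_1)$ is chosen $w$-positive (hence dominating), the ultrametric inequality gives $v(x-b_i) = v(b_1-b_i)$, a fixed element of $\Delta$. The term $v(x-b_1)$ can be pushed above $\gamma - \sum_{i\neq 1} v(b_1-b_i)$ by choosing $x$ to approximate $b_1$ well enough in the $v$-topology, which the algebraically-closed residue field of the coarsening permits. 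Summing, $v(f(x)) > \gamma$, as required. The delicate point I expect to be the main obstacle is making the approximation of $b_1$ by $x\in K$ quantitatively good in the $v$-topology rather than merely in the coarser $w$-topology: the coarsening only directly gives residue-level approximation modulo $\Delta$. The resolution is to \emph{iterate} the coarsening structure — since there is no maximum non-trivial coarsening, one can approximate modulo progressively smaller convex subgroups, building a sequence in $K$ that converges $v$-adically to $b_1$. Organizing this iteration cleanly (rather than appealing to a single coarsening) is where the real content lies, and it is precisely here that the ``no maximum coarsening'' hypothesis, as opposed to merely ``some non-trivial coarsening,'' is indispensable.
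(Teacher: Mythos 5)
Your overall strategy --- pass to the coarsening $w_\Delta$ determined by a convex subgroup $\Delta$ containing $\gamma$, and use the algebraically closed residue field of that coarsening to approximate a root --- is exactly the idea of the paper's proof. But your writeup stops short of the one observation that makes it work, and in its place you posit an ``iteration'' that you neither need nor carry out. The point you are missing is what $w_\Delta(x-b_1)>0$ actually says about $v$: by definition of the quotient ordering on $vK/\Delta$, a coset is positive precisely when its representatives exceed \emph{every} element of $\Delta$. So once you have $x\in K$ with $w_\Delta(x-b_1)>0$ --- which surjectivity of the residue map $\cO_{w_\Delta}\cap K\to Kw_\Delta$ onto the algebraically closed residue field $Kw_\Delta=(K^\alg)w_\Delta$ provides, after arranging $w_\Delta(b_1)=0$ by putting $v(b_1)$ into (the convex hull of) $\Delta$ as well --- you immediately get $v(x-b_1)>\delta$ for all $\delta\in\Delta$, and in particular $v(x-b_1)>\gamma$ since $\gamma\in\Delta$. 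This is not ``merely residue-level approximation'': it is approximation beyond the entire convex subgroup, strictly stronger than what you need. There is no quantitative gap to close and no iteration to organize; a single coarsening finishes the proof. The hypothesis that $\cO_v$ has no maximum non-trivial coarsening is used only to guarantee that the smallest convex subgroup containing the finitely many relevant values is \emph{proper}, so that $w_\Delta$ is a non-trivial coarsening to which the residue-field hypothesis applies --- not to supply a cofinal family of coarsenings for a limiting argument.

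As written, then, your proof has a genuine gap at its final step: the claim that $v(x-b_1)$ can be pushed above $\gamma-\sum_{i\neq 1}v(b_1-b_i)$ is justified only by an appeal to an iteration scheme whose ``real content'' you explicitly defer. Once the gap is repaired by the observation above, your detour through condition (5) of Proposition \ref{prop-0.1} and the product formula for $v(f(x))$ becomes superfluous: the paper simply approximates an arbitrary $a\in K^\alg$ directly, taking $\Delta_\gamma$ to be the smallest convex subgroup containing $\gamma$ (and $v(a)$), finding $b_\gamma\in K$ with the same $v_\gamma$-residue as $a$, and reading off $v(a-b_\gamma)>\gamma$. That is the whole proof.
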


\begin{proof}
	Fix a valuation $w$ on $K^\alg$ extending $v$. For each $\gamma \in vK$, let $\Delta_\gamma$ be the smallest convex subgroup of $vK$ containing $\gamma$, and let $v_\gamma$ be the coarsening of $v$ corresponding to $\Delta_\gamma$. Then for all $x\in K$, $v_\gamma(x)>0$ if and only if $v(x) > n\gamma$ for all $n\in\Z$.

    Fix $a\in K^\alg$, and note that $a\in\Delta_\gamma$ for all $\gamma > |v(a)|$. Then for all $\gamma>|v(a)|$, since $Kv_\gamma$ is algebraically closed, there exists $b_\gamma\in K$ such that $v_\gamma(a-b_\gamma) > v_\gamma(a) = 0$. In other words, $v(a-b_\gamma) > \gamma$, and thus $K$ is dense in $K^\alg$ with respect to $v$.
\end{proof}

In particular, this lemma applies when $K$ is IAC. Combining the previous two results, we obtain our criterion for identifying VAC fields.

\begin{prop} \label{criterion}
	Suppose $K$ is IAC. Then the following are equivalent:
	\begin{enumerate}
		\item $K$ is VAC.
		\item For every rank 1 valuation $v$ on $K^\alg$, $K$ is dense in $(K^\alg,v)$
		\item For every rank 1 valuation $v$ on $K$, the completion of $(K,v)$ is algebraically closed.
	\end{enumerate}
\end{prop}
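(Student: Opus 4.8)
The plan is to prove the equivalences by a cycle $(1)\to(2)\to(3)\to(1)$, leveraging the two preceding results. The implication $(1)\to(2)$ is essentially immediate: if $K$ is VAC, then by definition $K$ is dense in $(K^\alg,v)$ for \emph{every} non-trivial valuation $v$ on $K^\alg$, and in particular for those of rank $1$. For $(2)\to(3)$, I would fix a rank $1$ valuation $v$ on $K$ and apply Proposition \ref{prop-0.1}: condition (2) gives density in $(K^\alg,v)$, which is condition (3) of that proposition (density with respect to some, hence every, extension of $v$), and this is equivalent to condition (2) of Proposition \ref{prop-0.1}, namely that the completion of $(K,v)$ is algebraically closed.

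The substantive direction is $(3)\to(1)$, where I must upgrade information about rank $1$ valuations to \emph{all} non-trivial valuations on $K^\alg$. Here I would use Lemma \ref{nomax} together with the coarsening structure. Fix an arbitrary non-trivial valuation $v$ on $K$ (it suffices to work on $K$ rather than $K^\alg$ by the equivalence of (3) and (4) in Proposition \ref{prop-0.1}). I would split into two cases according to whether $\cO_v$ has a maximum non-trivial coarsening. Since $K$ is IAC, every non-trivial coarsening of $\cO_v$ has algebraically closed residue field; so if $\cO_v$ has \emph{no} maximum non-trivial coarsening, Lemma \ref{nomax} immediately yields that $K$ is dense in $K^\alg$ with respect to $v$.

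The remaining case is when $\cO_v$ does have a maximum non-trivial coarsening $\cO_w$. The key observation is that the associated valuation $w$ then has rank $1$: the convex subgroup $\Delta_w \leq vK$ is maximal among proper convex subgroups, so $vK/\Delta_w = wK$ has no proper non-trivial convex subgroups, making $w$ archimedean (rank $1$). By hypothesis (3), the completion of $(K,w)$ is algebraically closed, so by Proposition \ref{prop-0.1} (applied to $w$) $K$ is dense in $(K^\alg,w)$. I expect the main obstacle to be transferring density with respect to $w$ back to density with respect to the finer valuation $v$. Since $w$ is a coarsening of $v$, the two valuations are dependent and induce the \emph{same} topology (by Theorem 2.3.4 of \cite{EP05}, as noted in the discussion preceding Lemma \ref{nomax}); density is a topological notion, so density of $K$ in $K^\alg$ with respect to $w$ is literally density with respect to $v$. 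This identification of topologies is the crux that closes the argument. Combining both cases gives that $K$ is dense in $K^\alg$ with respect to every non-trivial valuation, i.e.\ $K$ is VAC, completing the cycle.
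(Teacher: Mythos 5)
Your proposal is correct and follows essentially the same route as the paper: the first two implications via Proposition \ref{prop-0.1}, and for $(3)\to(1)$ the same case split on whether the valuation ring has a maximum non-trivial coarsening, using Lemma \ref{nomax} in one case and, in the other, the rank-1 coarsening together with the fact that dependent valuations induce the same topology. No gaps.
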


\begin{proof}
	$(1)\to(2)$: If $K$ is VAC then $K$ is dense in $(K^\alg,v)$ for every valuation $v$ on $K^\alg$, including those of rank 1.
	
	$(2)\to(3)$: Let $v$ be a rank 1 valuation on $K^\alg$. Then $K$ is dense in $(K^\alg,v)$, so by Proposition \ref{prop-0.1}, the completion of $(K,v)$ is algebraically closed.
	
	$(3)\to(1)$: Fix a valuation $w$ on $K$; we wish to show that $K$ is dense in $K^\alg$ with respect to $w$. If $\cO_w$ does not have a maximum non-trivial coarsening, then this follows immediately from Lemma \ref{nomax}. Otherwise, let $v$ be the valuation corresponding to the maximum non-trivial coarsening of $\cO_w$; by choice of $v$, it has no proper non-trivial convex subgroups, and hence has rank 1. Then the completion of $(K,v)$ is algebraically closed by (3), and so by Proposition \ref{prop-0.1}, $K$ is dense in $K^\alg$ with respect to $v$. But $v$ and $w$ induce the same topology on $K^\alg$, which means $K$ is also dense in $K^\alg$ with respect to $w$.
\end{proof}

We will use this criterion to establish two situations in which IAC and VAC are equivalent. The argument in each of these situations is based on the following result of Macintyre, McKenna, and van den Dries:

\begin{fact} \label{lemma7} \cite[Lemma 7]{MMv}
    Let $(K,v)$ be a perfect henselian field such that
    \begin{enumerate}
        \item $Kv$ is algebraically closed,
        \item $vK$ is divisible,
        \item if $\Char(K) = 0$ and $\Char(Kv) = p>0$ then $K^\times$ is $p$-divisible, and
        \item if $\Char(K) = p>0$ then $K$ is closed under Artin-Schreier extensions.
    \end{enumerate}
    Then $K$ is algebraically closed.
\end{fact}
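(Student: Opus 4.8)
The plan is to prove the contrapositive shape of the statement: I will show that $(K,v)$ has no proper finite extension. Since $K$ is perfect, $K^\alg = K^\sep$, so it suffices to rule out proper finite \emph{Galois} extensions (the Galois closure of a minimal proper finite separable extension would be one). The first step is to observe that for any finite extension $L/K$ the ramification and residue invariants collapse. Because $Kv$ is algebraically closed and $Lv/Kv$ is algebraic, we get $f = [Lv:Kv] = 1$. Because $L/K$ is algebraic, $vL$ and $vK$ have the same divisible hull, and that hull is $vK$ itself since $vK$ is divisible; as $vK \subseteq vL \subseteq vK$, we conclude $vL = vK$ and $e = [vL:vK] = 1$. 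Since $(K,v)$ is henselian, $v$ extends uniquely, so $r = 1$, and for a finite Galois extension $N/K$ the defect formula gives $[N:K] = d\cdot r\cdot e\cdot f = d$. Invoking Ostrowski's theorem, that the defect of a finite extension of a henselian valued field is a power of the residue characteristic $p = \Char(Kv)$ (and is $1$ when $\Char(Kv) = 0$), I conclude that every finite Galois extension of $K$ has degree a power of $p$.

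If $\Char(Kv) = 0$, the defect is always $1$, so every finite extension is trivial and $K$ is already algebraically closed. Assume therefore that $\Char(Kv) = p > 0$ and, toward a contradiction, that $K$ is not algebraically closed. Then a proper finite separable extension exists; let $N/K$ be its Galois closure and $G = \Aut(N/K)$. By the previous paragraph $|G| = [N:K]$ is a nontrivial power of $p$, so $G$ is a nontrivial finite $p$-group and hence has a maximal subgroup $H$, which has index $p$ and is normal. Its fixed field $M = N^H$ then gives a cyclic Galois extension $M/K$ of degree exactly $p$, and the remaining task is to derive a contradiction from this single degree-$p$ extension.

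Here the argument splits on $\Char(K)$. If $\Char(K) = p$, then by Artin--Schreier theory a cyclic extension of degree $p$ is generated by a root of some $X^p - X - a$ with $a \in K$, contradicting hypothesis (4). If instead $\Char(K) = 0$ (necessarily mixed characteristic, as $\Char(Kv) = p$), I first note that $K$ contains the $p$-th roots of unity: $[K(\mu_p):K]$ divides $p-1$ but is also a power of $p$, so $[K(\mu_p):K] = 1$ and $\mu_p \subseteq K$. With $\mu_p \subseteq K$, Kummer theory identifies $M/K$ with $K(\sqrt[p]{a})$ for some $a \in K^\times\setminus (K^\times)^p$, contradicting hypothesis (3) that $K^\times$ is $p$-divisible. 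Either way we reach a contradiction, so no proper finite extension exists and $K$ is algebraically closed.

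I expect the main obstacle to be the defect input: the collapse $e = f = 1$ and the Galois-theoretic descent to a degree-$p$ extension are routine, but it is Ostrowski's theorem that upgrades ``henselian with algebraically closed residue field and divisible value group'' into the decisive statement that every obstruction to being algebraically closed lives in degree $p = \Char(Kv)$. A secondary point requiring care is the automatic presence of $\mu_p$ in the mixed-characteristic case, which is exactly what allows Kummer theory to be applied without assuming it as an extra hypothesis.
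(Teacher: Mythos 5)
The paper states this result as a \emph{Fact}, citing \cite[Lemma 7]{MMv}, and gives no proof of its own, so there is nothing internal to compare against. Your argument is correct and is essentially the standard proof of that lemma: the hypotheses force $e=f=r=1$ for any finite Galois extension, Ostrowski's defect theorem then confines all obstructions to $p$-power degree where $p$ is the residue characteristic, and the reduction via a normal index-$p$ subgroup of the $p$-group Galois group produces a cyclic degree-$p$ extension that is killed by Artin--Schreier theory (equal characteristic) or by Kummer theory (mixed characteristic, using your correct observation that $\mu_p\subseteq K$ comes for free since $[K(\mu_p):K]$ divides $p-1$ while also being a $p$-power). No gaps.
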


\subsection{Positive Characteristic}
\label{positive}

In Section \ref{counterexample}, we observed that particular Artin-Schreier extensions can be used to demonstrate that an IAC field of positive characteristic is not VAC. As the theorem below shows, the existence of Artin-Schreier extensions is the only way that IAC fields can fail to be VAC in positive characteristic.

\begin{theorem} \label{asc}
    Suppose $K$ is field of positive characteristic which is IAC and Artin-Schreier closed. Then $K$ is VAC.
\end{theorem}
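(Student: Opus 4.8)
The plan is to apply the VAC criterion from Proposition \ref{criterion}, which reduces the problem to showing that for every rank 1 valuation $v$ on $K$, the completion of $(K,v)$ is algebraically closed. Since $K$ is IAC, we already know that $Kv$ is algebraically closed and $vK$ is divisible for every non-trivial $v$, so the strategy is to verify that the completion satisfies all the hypotheses of the Macintyre--McKenna--van den Dries result, Fact \ref{lemma7}, and then conclude algebraic closure directly.

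First I would fix a rank 1 valuation $v$ on $K$ and let $(L,w)$ denote its completion. I would check the hypotheses of Fact \ref{lemma7} for $(L,w)$ one at a time. The completion is always henselian (a standard fact for rank 1 valued fields, since completeness implies henselianity in that setting), so that requirement is met. The completion is an immediate extension of $(K,v)$, so $Lw = Kv$ is algebraically closed and $wL = vK$ is divisible, giving conditions (1) and (2). Since $K$ has positive characteristic $p$, condition (3) is vacuous. The perfectness of $L$ should follow from the fact that $K$ is Artin-Schreier closed and IAC --- an Artin-Schreier closed field of characteristic $p$ is perfect (closure under Artin-Schreier extensions forces the Frobenius to be surjective, as otherwise one produces a purely inseparable, hence non-Artin-Schreier, obstruction), and I would need to argue that perfectness passes to the completion, or alternatively verify perfectness of $L$ directly from the immediacy of the extension.

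The main obstacle is condition (4): showing that the completion $L$ is closed under Artin-Schreier extensions, given only that $K$ itself is. This does not transfer for free, because an Artin-Schreier polynomial $X^p - X - a$ with $a \in L$ need not have its parameter $a$ lying in $K$. The plan here is to use a density-plus-henselianity argument: given $a \in L$, approximate $a$ by some $a' \in K$ with $w(a - a')$ sufficiently large. By hypothesis $X^p - X - a'$ has a root $\theta' \in K \subseteq L$, and then I would use Hensel's lemma in the complete (hence henselian) field $L$ to lift $\theta'$ to a genuine root of $X^p - X - a$, exploiting that $w(a - a')$ large forces $\theta'$ to be an approximate root good enough for henselian lifting. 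The continuity-of-roots tool (Fact \ref{continuity}) or a direct Hensel estimate would make this precise; the key point is that Artin-Schreier polynomials have derivative $-1$, so the henselian lifting condition is easy to satisfy.

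Once all four hypotheses of Fact \ref{lemma7} are verified for $(L,w)$, I conclude that $L$ is algebraically closed. This holds for every rank 1 valuation $v$ on $K$, so condition (3) of Proposition \ref{criterion} is satisfied, and therefore $K$ is VAC. I expect the perfectness and the Artin-Schreier-closure transfer to the completion to be the two delicate points; everything else is immediate from the IAC hypothesis and the elementary properties of completions of rank 1 valued fields.
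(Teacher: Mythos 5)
Your overall strategy is exactly the paper's: reduce via Proposition \ref{criterion} to showing that the completion $L$ of $(K,v)$ is algebraically closed for every rank $1$ valuation $v$, and verify the hypotheses of Fact \ref{lemma7} for $L$. Your treatment of henselianity, of conditions (1) and (2) via immediacy, and of the Artin-Schreier closure of $L$ (approximate $a\in L$ by $a'\in K$, take a root $\theta'\in K$ of $X^p-X-a'$, and Hensel-lift using $f'=-1$) is correct; the paper simply outsources that last step to Lemma 4.8 of \cite{Kuhl10}.

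The genuine gap is the perfectness of $L$. Your claimed justification --- that an Artin-Schreier closed field of characteristic $p$ is perfect because a non-surjective Frobenius would produce a ``non-Artin-Schreier obstruction'' --- is false: Artin-Schreier extensions are separable, so any separably closed field is Artin-Schreier closed, yet $\F_p(t)^\sep$ is not perfect (it does not contain $t^{1/p}$). Your fallback of deducing perfectness from immediacy also fails, since an immediate extension of an imperfect field can be imperfect. What actually saves the argument is a valuation-theoretic approximation, which is the content of Corollary 4.7 of \cite{Kuhl10} cited by the paper: for $a\in K$ and $b\in K$ with $v(b)$ large, any root $\theta\in K$ of $X^p-X-ab^{-p}$ satisfies $v(\theta)=\frac{1}{p}(v(a)-pv(b))<0$, whence $(b\theta)^p=a+b^p\theta$ with $v(b^p\theta)=(p-1)v(b)+\frac{1}{p}v(a)\to\infty$; so every element of $K$, and by density every element of $L$, is a limit of $p$th powers, and since $v(x^p-y^p)=pv(x-y)$ the approximating sequence is Cauchy and converges in $L$ to a $p$th root. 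With that repair your proof is complete and coincides with the paper's.
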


\begin{proof}
    By Proposition \ref{criterion}, it suffices to show that for every rank 1 valuation $v$ on $K$, the completion of $K$ with respect to $v$ is algebraically closed. We will establish this through Fact \ref{lemma7}.
	
	Fix a rank 1 valuation $v$ on $K$, and let $L$ be the completion of $K$ with respect to $v$. As remarked on page 85 of \cite{EP05}, the completion of every rank 1 valued field is henselian. Moreover, since $K$ is IAC and $L$ is an immediate extension of $K$, we get that $Lv = Kv$ is algebraically closed and $vL = vK$ is divisible. Lastly, $L$ is closed under Artin-Schreier extensions by Lemma 4.8 of \cite{Kuhl10} and perfect by Corollary 4.7 of the same paper.
	
	Thus, we may apply Fact \ref{lemma7} to obtain that $L$ is algebraically closed, and hence that $K$ is VAC.
\end{proof}

From this result, we can see that for fields without Artin-Schreier extensions, IAC and VAC are equivalent. One such case is that of NIP fields:

\begin{cor}
    Suppose $K$ is an infinite NIP field of positive characteristic. Then $K$ is IAC if and only if it is VAC.
\end{cor}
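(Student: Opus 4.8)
The plan is to reduce the corollary to Theorem \ref{asc}, so that almost all of the work is already done. One direction requires nothing new: if $K$ is VAC then $K$ is IAC by Proposition \ref{easy}(3), and this uses neither the NIP hypothesis nor the assumption on the characteristic. Hence the entire content of the statement lies in the forward implication, namely that an infinite NIP IAC field of positive characteristic is VAC.

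For that implication I would invoke Theorem \ref{asc}, which asserts that an IAC field of positive characteristic that is Artin-Schreier closed is automatically VAC. Thus it suffices to verify that $K$ is Artin-Schreier closed, i.e.\ that $K$ admits no proper Artin-Schreier extensions. This is precisely the point at which the NIP assumption is used: by the theorem of Kaplan, Scanlon, and Wagner on Artin-Schreier extensions in NIP fields, every infinite NIP field of characteristic $p>0$ is Artin-Schreier closed. Feeding this into Theorem \ref{asc} immediately yields that $K$ is VAC, completing the forward direction, and so the corollary is just the conjunction of Proposition \ref{easy}(3), the Kaplan--Scanlon--Wagner theorem, and Theorem \ref{asc}.

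The one genuinely nontrivial input here is the Artin-Schreier closedness itself, and I expect this to be the main obstacle precisely because it is \emph{not} a valuation-theoretic statement and therefore cannot be extracted from the machinery developed elsewhere in this paper; its proof is model-theoretic rather than algebraic. Informally, if $K$ had a proper Artin-Schreier extension, equivalently if the additive map $\wp\colon x\mapsto x^p-x$ failed to be surjective on $K$, then the additive structure of $K$ together with the cosets of $\wp(K)$ would encode a definable family witnessing the independence property, contradicting NIP. Since I am content to cite that result rather than reprove it, no further argument is needed beyond assembling the three ingredients above.
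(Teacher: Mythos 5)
Your proof is correct and follows exactly the same route as the paper: the converse direction via Proposition \ref{easy}(3), and the forward direction by combining the Kaplan--Scanlon--Wagner theorem (infinite NIP fields of positive characteristic are Artin-Schreier closed) with Theorem \ref{asc}. No gaps.
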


\begin{proof}
    Suppose $K$ is NIP and IAC. By \cite{KSW}, every infinite NIP field is Artin-Schreier closed, so by the theorem, $K$ is VAC. The converse always holds by Proposition \ref{easy}(3).
\end{proof}

In general, it is unclear whether being Artin-Schreier closed is a necessary condition for an IAC field to be VAC. In light of Proposition \ref{no-asc}, Artin-Schreier defect extensions certainly need to be avoided, but the existence of a VAC field with a defectless Artin-Schreier extension is currently an open problem.

\subsection{Characteristic Zero}
\label{zero}

Since an IAC field $K$ of characteristic zero will have valuations of every possible residue characteristic, in order to apply Fact \ref{lemma7} we need to know that the multiplicative group is $p$-divisible for every prime $p$. But just like in the proof of Theorem \ref{asc}, we need to apply Fact \ref{lemma7} not to $K$ itself, but to the completion of $K$ with respect to some valuation. We begin with a straightforward lemma that shows that $p$-divisibility of the multiplicative group passes from any valued field to its completion.

\begin{lemma} \label{p-divisible}
	Suppose $(K,v)$ is a valued field such that $K^\times$ is $p$-divisible. Then $L^\times$ is $p$-divisible, where $L$ is the completion of $K$ with respect to $v$.
\end{lemma}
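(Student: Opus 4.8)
The plan is to lift $p$-th roots from $K$ to the completion $L$ along a Cauchy sequence. Fix $a\in L^\times$ and choose a Cauchy sequence $(a_\alpha)_{\alpha<\kappa}$ in $K$ converging to $a$; since $a\neq 0$, after passing to a cofinal tail I may assume $v(a_\alpha)=v(a)$ and in particular $a_\alpha\neq 0$. Using that $K^\times$ is $p$-divisible, for each $\alpha$ I would pick some $b_\alpha\in K$ with $b_\alpha^p=a_\alpha$. The goal is to produce an element of $L$ whose $p$-th power is $a$, and the naive hope is that $(b_\alpha)$ itself is Cauchy and converges to the desired root.

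The hard part is exactly that this naive hope can fail: the $p$-th root of $a_\alpha$ in $K$ is not canonical, so there is no reason a priori that the chosen $b_\alpha$ vary coherently, and $(b_\alpha)$ need not be Cauchy. To get around this I would work in $(L^{\alg},v)$ and factor $X^p-a=\prod_{j=1}^p(X-c_j)$ into its (not necessarily distinct) roots $c_1,\dots,c_p\in L^{\alg}$. The key computation is
\[ v(a_\alpha-a)=v\big(b_\alpha^p-a\big)=\sum_{j=1}^p v(b_\alpha-c_j), \]
so if $m_\alpha=\max_j v(b_\alpha-c_j)$ then $p\,m_\alpha\ge v(a_\alpha-a)$. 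Since $(a_\alpha)$ converges to $a$, the right-hand side eventually exceeds $p\gamma$ for every $\gamma\in vK$, whence $m_\alpha>\gamma$ eventually; that is, each $b_\alpha$ lies arbitrarily close (as $\alpha$ grows) to at least one root of $X^p-a$. Choosing for each $\alpha$ an index $j(\alpha)$ attaining the maximum and applying the pigeonhole principle to the finitely many possible values, there is a single root $d=c_{j_0}$ and a cofinal subset $S\subseteq\kappa$ on which $j(\alpha)=j_0$; along $S$ we then have $v(b_\alpha-d)=m_\alpha$ eventually larger than any prescribed $\gamma\in vK$, so $(b_\alpha)_{\alpha\in S}$ converges to $d$ in $L^{\alg}$ (using that $vK$ is cofinal in its divisible hull $vL^{\alg}$).

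Finally I would close the argument with completeness. The subsequence $(b_\alpha)_{\alpha\in S}$ lies in $K$ and, being convergent, is Cauchy; as $L$ is the completion of $(K,v)$ it converges to some $d'\in L$, and by uniqueness of limits in the Hausdorff valuation topology on $L^{\alg}$ we get $d'=d$. Hence $d\in L$ and $d^p=a$, exhibiting a $p$-th root of $a$ in $L$. As $a\in L^\times$ was arbitrary, $L^\times$ is $p$-divisible. I note that this argument needs neither continuity of roots nor any case split on the characteristic: the multiplicity-counted factorization handles $\Char K=p$ (where $X^p-a$ has a single root) and $\Char K\neq p$ uniformly, and the only real subtlety, namely the incoherent choice of roots, is absorbed by the pigeonhole step.
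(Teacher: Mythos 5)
Your proof is correct, and it reaches the conclusion by the same overall route as the paper --- approximate $a$ from $K$, use $p$-divisibility of $K^\times$ to produce $p$-th roots $b_\alpha\in K$ of the approximants, show each $b_\alpha$ is close to some root of $X^p-a$, pigeonhole onto a single root $d$, and invoke completeness --- but the technical heart is handled differently. The paper appeals to continuity of roots (Fact \ref{continuity}) to pass from closeness of $c_\alpha$ to $a$ to closeness of the roots, whereas you replace that black box with the direct identity $v(b_\alpha^p-a)=\sum_{j}v(b_\alpha-c_j)$ and the elementary bound $p\max_j v(b_\alpha-c_j)\geq\sum_j v(b_\alpha-c_j)$. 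This is a genuine improvement in robustness for this particular polynomial: Fact \ref{continuity} as stated requires $f$ to have \emph{distinct} roots, which fails for $X^p-a$ when $\Char K=p$ (a single root of multiplicity $p$), so your multiplicity-counted factorization sidesteps a wrinkle that the paper's citation glosses over, and it does so uniformly in the characteristic, exactly as you note. Two trivial points you could make explicit: if $b_\alpha=c_j$ for some $j$ then $a_\alpha=a$ and $b_\alpha\in K$ is already the desired root, so the valuations in your sum may be taken finite; and if $v$ is trivial then $L=K$ and there is nothing to prove, so the valuation topology is Hausdorff and your uniqueness-of-limits step is safe. Neither affects the validity of the argument.
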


\begin{proof}
	Consider some extension of $v$ to $L^\alg$ which we will also denote $v$. Fix $a\in L$, and let $b_1,\ldots,b_p\in L^\alg$ be the roots of $X^p-a$. We will show that one of these roots can be approximated arbitrarily well in $K$, and hence is contained in $L$.
	
	Let $(\gamma_\alpha)_{\alpha<\kappa}$ be a cofinal sequence in $vK = vL$. By continuity of roots, for each $\alpha<\kappa$ there exists $c_\alpha \in K$ such that each root of $X^p-c_\alpha$ is within $\gamma_\alpha$ of a root of $X^p-a$. By assumption, at least one of the roots of $X^p-c_\alpha$ is in $K$; call this root $d_\alpha$. While it is not necessarily true that each $d_\alpha$ approximates the same root of $X^p-a$, by the pigeonhole principle, there is $1\leq i\leq p$ and a cofinal subsequence of $(d_\alpha)_{\alpha<\kappa}$ such that $v(d_\alpha-b_i) > \gamma_\alpha$ for each $d_\alpha$ in the subsequence. Thus, $b_i \in L$, and so $L^\times$ is $p$-divisible.
\end{proof}

\begin{theorem} \label{divisible}
    Suppose $K$ is an IAC field of characteristic zero such that $K^\times$ is divisible. Then $K$ is VAC.
\end{theorem}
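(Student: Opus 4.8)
The plan is to mimic the proof of Theorem \ref{asc}: reduce to completions via Proposition \ref{criterion}, and then invoke Fact \ref{lemma7}. Since $K$ is IAC, Proposition \ref{criterion} tells us that it suffices to show that for every rank 1 valuation $v$ on $K$, the completion $(L,v)$ of $(K,v)$ is algebraically closed. I would obtain this by checking that $(L,v)$ satisfies the four hypotheses of Fact \ref{lemma7}.

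First I would record the properties that come essentially for free. As in Theorem \ref{asc}, the completion of a rank 1 valued field is henselian, so $L$ is henselian. Because $K$ is IAC and $L$ is an immediate extension of $K$, we have $Lv = Kv$ algebraically closed (hypothesis (1)) and $vL = vK$ divisible (hypothesis (2)). Since $\Char(K) = 0$, the completion $L$ also has characteristic zero, so $L$ is automatically perfect, and hypothesis (4) of Fact \ref{lemma7} (which only concerns positive characteristic) is vacuous.

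The one hypothesis requiring genuine work is (3): if $\Char(Kv) = p > 0$, then $L^\times$ must be $p$-divisible. This is exactly where the assumption that $K^\times$ is divisible enters. Being divisible, $K^\times$ is in particular $p$-divisible for every prime $p$, and Lemma \ref{p-divisible} shows that $p$-divisibility transfers from $K$ to its completion $L$. With all four hypotheses verified, Fact \ref{lemma7} gives that $L$ is algebraically closed, and Proposition \ref{criterion} then yields that $K$ is VAC.

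I expect the main (and essentially the only) obstacle to be hypothesis (3), whose content is captured precisely by Lemma \ref{p-divisible}. The reason the full strength of divisibility of $K^\times$ is needed, rather than $p$-divisibility for a single $p$, is that an IAC field of characteristic zero carries valuations of every residue characteristic; so $p$-divisibility is required for all primes $p$ simultaneously, and it must survive passage to the completion.
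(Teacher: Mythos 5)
Your proof is correct and follows essentially the same route as the paper: reduce to rank 1 completions via Proposition \ref{criterion}, transfer $p$-divisibility to the completion via Lemma \ref{p-divisible}, and conclude with Fact \ref{lemma7}. The only (immaterial) difference is that the paper treats the residue characteristic zero case separately via Ax--Kochen--Ershov, whereas you observe that hypothesis (3) of Fact \ref{lemma7} is vacuous there, which works equally well.
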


\begin{proof}
	By Proposition \ref{criterion}, it suffices to show that the completion of $K$ with respect to any rank 1 valuation is algebraically closed. Let $v$ be a rank 1 valuation of $K$ and let $L$ be the completion of $K$ with respect to $v$. As in Theorem \ref{asc}, $L$ is IAC and henselian.

    If $\Char(Lv) = 0$ then by Ax-Kochen-Ershov, $L$ must be algebraically closed. On the other hand, if $\Char(Lv) = p > 0$ then $L^\times$ is $p$-divisible by Lemma \ref{p-divisible}, and hence $L$ is algebraically closed by Fact \ref{lemma7}. In either case, $K$ is VAC by Proposition \ref{criterion}.
\end{proof}

Unlike the assumption in Theorem \ref{asc} that $K$ is Artin-Schreier closed, we know that the assumption that $K^\times$ is divisible in Theorem \ref{divisible} is stronger than necessary. As observed by Hong in his thesis \cite{Hong13}, every archimedean real closed field is VAC, but such fields clearly do not have $2$-divisible multiplicative groups. We repeat Hong's proof below.

\begin{prop} \label{realex}
    Every archimedean real closed field $R$ is VAC.
\end{prop}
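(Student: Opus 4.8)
The plan is to apply Proposition \ref{criterion}, so the first task is to verify that $R$ is IAC and then to check condition (2) of that proposition for rank $1$ valuations. Recall that $R^\alg = R(i)$ with $i^2 = -1$, so $[R^\alg:R]=2$. That $vR$ is divisible for every valuation $v$ on $R$ is immediate from real closedness: every positive element of $R$ has an $n$th root for all $n$, so given $x\in R^\times$ we may assume $x>0$ (replacing $x$ by $\sqrt{x^2}$) and then $v(\sqrt[n]{x}) = v(x)/n$.

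The key observation is that, because $R$ is archimedean, its only convex valuation ring is $R$ itself: a valuation ring contains $1$, hence $\Z$, and by the archimedean property every element lies between two integers, so convexity forces the ring to equal $R$. Consequently every non-trivial valuation $v$ on $R$ is non-convex, which produces an element $w\in R$ with $w>1$ and $v(w)>0$ (take $w=y/x$ for $0<x\le y$ with $y\in\cO_v$ and $x\notin\cO_v$; then $v(w)=-v(x)>0$ and $w\ge 1$, with equality excluded since $v(x)\neq v(y)$). I would use $w$ to show $Rv$ contains a square root of $-1$: the element $u=\sqrt{w-1}$ lies in $R$ since $w-1>0$, it is a unit since $v(w-1)=0$, and $\res(u)^2=\res(w-1)=\res(w)-1=-1$ because $v(w)>0$. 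Since $R^\alg$ is the algebraic closure of $R$, the residue field $R^\alg v$ is the algebraic closure of $Rv$, and the fundamental inequality gives $[R^\alg v:Rv]\le[R^\alg:R]=2$. By the Artin--Schreier theorem $Rv$ is therefore either algebraically closed or real closed; but a real closed field has no square root of $-1$, so $Rv$ must be algebraically closed. Hence $R$ is IAC.

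It then remains to establish density in $R^\alg$ for rank $1$ valuations $v$ on $R^\alg$. The restriction of such a $v$ to $R$ is again non-trivial (an algebraic extension cannot make a trivial valuation non-trivial), hence non-convex, so it yields an element $w$ as above. Fix $a+bi\in R^\alg$ with $b\neq 0$ and $\gamma\in vR^\alg$. For each $k$ large enough that $w^k>b^2$ --- possible since $w>1$ and $R$ is archimedean force $w^k\to\infty$ in the order --- the element $s_k=w^k-b^2$ is positive, so $r_k=\sqrt{s_k}\in R$, and $r_k^2+b^2=w^k$, whence $v(r_k^2+b^2)=k\,v(w)$. Because $v$ has rank $1$ its value group is archimedean, so $\{k\,v(w)\}$ is cofinal and I can choose $k$ with $k\,v(w)>2\gamma$. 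From $(r_k-bi)(r_k+bi)=r_k^2+b^2$, at least one of $v(r_k-bi)$, $v(r_k+bi)$ exceeds $\gamma$; choosing the sign accordingly, either $a+r_k$ or $a-r_k$ lies within $\gamma$ of $a+bi$. This gives density, and Proposition \ref{criterion} then yields that $R$ is VAC.

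The main obstacle --- and the reason this case is not subsumed by Theorem \ref{divisible} --- is that $R^\times$ is never $2$-divisible, so for valuations of residue characteristic $2$ one cannot invoke Fact \ref{lemma7} to conclude that the relevant completion is algebraically closed. The square-root trick above is precisely what sidesteps this: real closedness supplies square roots of the positive elements $w^k-b^2$, which are $v$-adically close to $-b^2$, and these approximate $bi$ even though $-b^2$ is negative in the order. The only delicate point is the cofinality of $\{k\,v(w)\}$ in the value group, which can fail at higher rank; this is exactly why the reduction to rank $1$ through Proposition \ref{criterion} is essential.
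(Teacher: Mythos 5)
Your proof is correct, and its engine --- taking a square root $r=\sqrt{w^k-b^2}\in R$ of a positive element and splitting $v(r^2+b^2)=v(r-bi)+v(r+bi)$ to force one factor to have large valuation --- is exactly the computation at the heart of the paper's proof. The packaging differs, though. The paper does not route through Proposition \ref{criterion} at all: it fixes an arbitrary non-trivial valuation $v$ on $C=R[i]$ and, instead of powers $w^k$ of a single element of positive value, uses $e^nc$ where $c\in R$ is any positive element with $v(c)>2\gamma$ and $n$ is chosen (by the archimedean property) so that $e^nc>y^2$. Since $v(e^nc)=nv(e)+v(c)>2\gamma$ automatically, the cofinality issue you flag never arises, no reduction to rank $1$ is needed, and consequently the preliminary verification that $R$ is IAC is unnecessary --- the paper deduces IAC afterwards from VAC via Proposition \ref{easy}(3) rather than proving it first. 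Your detour costs you the Artin--Schreier argument for the residue field, which is nonetheless correct ($\res(\sqrt{w-1})^2=-1$ rules out $Rv$ being real closed, and the fundamental inequality bounds $[Rv^{\alg}:Rv]$ by $2$), and your diagnosis of why the rank-$1$ reduction is essential in your version is accurate. One small imprecision: when producing $w=y/x$ you write $v(w)=-v(x)$, which presumes $v(y)=0$; you only know $v(y)\ge 0$, so the correct statement is $v(w)=v(y)-v(x)>0$, which is all you actually use.
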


\begin{proof}
    Let $C = R[i]$ be the algebraic closure of $R$, and fix a non-trivial valuation $v$ on $C$. Let $\gamma\in vC$ and $a = x+yi \in C$ with $x,y\in R$; we want to find $b\in R$ with $v(b-a) > \gamma$.

    Recall that a valuation on an ordered field is called convex if $v(x) > 0$ implies $|x| < \frac1n$ for all $n\in\N$. It follows easily that $R$ cannot have a non-trivial convex valuation, since $v(x) < 0$ would imply $|x| > n$ for all $n\in\N$, contradicting the archimedean property of $R$. Thus, $v$ is not a convex valuation, which means $R$ must contain an element $e$ with $v(e) > 0$ and $e > 1$.

    Let $c\in R$ be any element with $c>0$ and $v(c) > 2\gamma$. Then, since $R$ is archimedean, there exists $n\in\N$ such that $e^nc > y^2$. Thus there exists $d\in R$ such that $(d-x)^2 = e^nc-y^2 > 0$, because $R$ is real closed. Rearranging, this means that $(d-x)^2+y^2 = e^nc$, and hence
    \[ 2\gamma < v(e^nc) = v((x-d)^2+y^2) = v(d-x-iy) + v(d-x+iy) .\]
    One of the valuations on the right must therefore be greater than $\gamma$. If $v(d-x-iy) > \gamma$ then we may simply take $b=d$. Otherwise,
    \[ \gamma < v(d-x+iy) = v((d-2x)+(x+iy)) = v((2x-d)-(x+iy)) \]
    since $v(z) = v(-z)$. In this case, we may take $b = 2x-d\in R$, completing the proof.
\end{proof}

Hong's result can be very easily extended to show that for real closed fields, IAC is equivalent to VAC. As with the previous theorems, the assumption that $R$ is real closed is likely stronger than necessary.

\begin{theorem} \label{realthm}
	Let $R$ be a real closed field. Then the following are equivalent:
	\begin{enumerate}
		\item $R$ is archimedean
		\item $R$ is VAC
		\item $R$ is IAC
	\end{enumerate}
\end{theorem}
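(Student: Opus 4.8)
The plan is to prove the cycle $(1)\Rightarrow(2)\Rightarrow(3)\Rightarrow(1)$. The first implication is exactly Proposition \ref{realex} (Hong's result that archimedean real closed fields are VAC), and the second is Proposition \ref{easy}(3), so both are already in hand. All the new content lies in $(3)\Rightarrow(1)$, which I would establish by contraposition: assuming $R$ is not archimedean, I would exhibit a single non-trivial valuation witnessing that $R$ is not IAC.

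Concretely, since $R$ is real closed it carries its unique ordering, and failure of the archimedean property means there is a positive $x\in R$ with $x>n$ for every $n\in\N$. I would then take $v$ to be the natural convex valuation whose valuation ring $\cO_v$ is the convex hull of $\Q$ in $R$, that is, the ring of finite elements $\{y\in R : |y|<n \text{ for some } n\in\N\}$. Its maximal ideal $\m_v$ is the set of infinitesimals, which is non-zero because $1/x$ is a non-zero infinitesimal; hence $v$ is a non-trivial valuation on $R$.

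The key observation is that, because $\cO_v$ is convex, the ordering of $R$ descends to a well-defined ordering on the residue field $Rv$: for a unit $u\in\cO_v$ one declares $\res(u)>0$ exactly when $u>0$, and convexity guarantees this is independent of the representative (if $u$ is a positive non-infinitesimal finite element and $u-u'\in\m_v$, then $u'>0$ as well). Thus $Rv$ is an ordered, hence formally real, field, and a formally real field can never be algebraically closed (for instance, $-1$ is not a square there). This directly contradicts the IAC condition, which demands that $Rv$ be algebraically closed for \emph{every} non-trivial valuation $v$. Therefore a non-archimedean real closed field fails to be IAC, which is the contrapositive of $(3)\Rightarrow(1)$.

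The main, and essentially only, obstacle is verifying that the ordering descends to the residue field of the convex valuation, i.e. that $Rv$ is formally real; everything else is either a previously established proposition or an immediate consequence of non-archimedeanness. This descent is standard for convex valuations on ordered fields, but it is the step carrying the content. I would also remark that real closedness is used only to equip $R$ with an ordering: the same argument shows that \emph{any} non-archimedean ordered field fails to be IAC.
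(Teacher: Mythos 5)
Your proposal is correct and follows essentially the same route as the paper: both reduce everything to the implication from IAC to archimedean via the convex hull valuation (the paper uses the convex hull of $\Z$, citing Proposition 2.2.4 of \cite{EP05} for the fact that its residue field is formally real, while you use the convex hull of $\Q$ and sketch that descent directly), and both conclude that a formally real residue field contradicts IAC unless the valuation is trivial. The only cosmetic difference is that you argue by contraposition where the paper argues directly.
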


\begin{proof}
    $(1)\to (2)$: This is precisely Proposition \ref{realex}.

    $(2)\to (3)$: This holds even without the assumption that $R$ is real closed by Proposition \ref{easy}(3).

	$(3)\to (1)$: It is easy to check that the convex hull of $\Z$ in any ordered field is a valuation ring. By Proposition 2.2.4 of \cite{EP05}, this valuation ring has a formally real residue field. Since $R$ is IAC by assumption, this can only happen if the corresponding valuation is trivial. Then the convex hull of $\Z$ is all of $R$, which means $R$ is archimedean.
\end{proof}

\section{Strongly IAC Fields}
\label{52-stronglyiac}

The previous sections focused on IAC and VAC fields as algebraic objects. In this section, we consider some basic model theoretic properties of these fields. Throughout this section, we consider both pure fields in the language $\Lring = \{0,1,+,-,\cdot\}$ and valued fields in the language $\Ldiv = \Lring\cup\{\mid\}$, where $\mid$ is a binary relation interpreted as $x\mid y$ if and only if $v(x) \leq v(y)$ for some distinguished valuation $v$. To help avoid confusion, we will write $K$ for pure fields ($\Lring$-structures) and $(K,v)$ or $(K,\cO)$ for valued fields ($\Ldiv$-structures).

One curious difference between IAC and VAC is that the definition of IAC can be made without specifying any valuations on $K^\alg$, whereas VAC seems to require quantifying over all valuations of $K^\alg$, not just valuations of $K$. However, in Proposition \ref{prop-0.1}, we saw that the density of $(K,v)$ in $(K^\alg,w)$ depends only on $v$, not on $w$. Moreover, as we prove below, for a fixed valuation $v$ this statement can be made in a first-order way in $\Ldiv$.

\begin{prop}
    The theory of valued fields that are dense in their algebraic closure is axiomatizable in $\Ldiv$.
\end{prop}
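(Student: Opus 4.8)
The plan is to express ``$(K,v)$ is dense in $(K^\alg,v)$'' as a first-order axiom scheme in $\Ldiv$ by exploiting the equivalence with condition (5) of Proposition \ref{prop-0.1}, which already eliminates any reference to the algebraic closure or to extensions of $v$. That condition says: for every non-constant separable $f(X)\in K[X]$ and every $\gamma\in vK$, there is $x\in K$ with $v(f(x))>\gamma$. The two apparent obstructions to first-orderizing this are (i) quantifying over polynomials of unbounded degree, and (ii) quantifying over the value group element $\gamma$, which is not directly a sort in $\Ldiv$. Obstruction (i) is handled in the usual way: a degree-$n$ polynomial is coded by its tuple of coefficients $(a_0,\ldots,a_n)$, so I would write a separate axiom for each fixed $n$ and take the scheme over all $n\in\N$. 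Obstruction (ii) is handled by observing that ``$v(y)>\gamma$'' for $\gamma$ ranging over $vK$ is the same as ``$v(y)>v(c)$ for all $c\in K^\times$'' once we replace the abstract $\gamma$ by a witnessing field element; in $\Ldiv$ the relation $v(c)\leq v(y)$ is exactly $c\mid y$, and strict inequality $v(c)<v(y)$ is $(c\mid y)\wedge\neg(y\mid c)$.

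Concretely, for each $n\geq 1$ I would introduce the axiom asserting: for all $a_0,\ldots,a_n$ and all $c$, if $a_n\neq 0$ and the polynomial $\sum a_iX^i$ is separable, then there exists $x$ such that $v\!\left(\sum a_ix^i\right)>v(c)$, i.e.\ writing $y$ for the term $\sum a_ix^i$, we demand $(c\mid y)\wedge\neg(y\mid c)$. The clause ``$f$ is separable'' is itself first-order for fixed degree: separability of $f$ is equivalent to $\gcd(f,f')=1$, and the vanishing of the resultant $\operatorname{Res}(f,f')$ is a single polynomial condition in the coefficients $a_0,\ldots,a_n$, so ``$f$ separable'' becomes $\operatorname{Res}(f,f')\neq 0$, an $\Lring$-formula. (One must be mildly careful that $f'$ has its expected degree, but this too is controlled by finitely many coefficient conditions, handled by splitting into finitely many subcases or by phrasing separability via the resultant directly.) The required quantifier over $\gamma\in vK$ collapses to the single universal field-quantifier over $c$, since every positive element of $vK$ is $v(c)$ for some $c$, and cofinality of $\{v(c):c\in K^\times\}$ in itself is automatic.

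The one subtlety worth flagging is the edge case where $vK$ is nontrivial versus trivial, and more importantly whether every $\gamma\in vK$ is realized as $v(c)$ for a field element $c$ — which it is, by definition of $vK$ as the value group of $K$, so no extension elements are needed. Thus the axiom scheme
\[
\{\varphi_n : n\geq 1\},\qquad
\varphi_n \equiv \forall a_0\cdots\forall a_n\,\forall c\,\bigl(\psi_n^{\mathrm{sep}} \to \exists x\,\chi_n\bigr),
\]
where $\psi_n^{\mathrm{sep}}$ is the $\Lring$-formula expressing $a_n\neq 0 \wedge \operatorname{Res}(f,f')\neq 0$ and $\chi_n$ is $(c\mid \sum a_ix^i)\wedge\neg(\sum a_ix^i\mid c)$, axiomatizes exactly the valued fields satisfying condition (5), hence by Proposition \ref{prop-0.1} exactly those that are dense in their algebraic closure. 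I expect the main obstacle to be the bookkeeping in the separability clause—ensuring that the resultant formulation correctly captures separability uniformly across the degenerate cases in characteristic $p$ (inseparable polynomials, repeated roots)—rather than any conceptual difficulty, since Proposition \ref{prop-0.1} has already done the heavy lifting of removing $K^\alg$ from the statement.
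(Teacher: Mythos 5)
Your proposal is correct and follows essentially the same route as the paper: both reduce to condition (5) of Proposition \ref{prop-0.1} and encode it as a scheme $\{\sigma_n\}_{n\geq 1}$, one sentence per degree, replacing the quantifier over $\gamma\in vK$ by a field quantifier over a nonzero witness $c$ with $v(c)=\gamma$ and expressing the strict inequality via the divisibility predicate. Your additional care in spelling out that separability for fixed degree is an $\Lring$-condition on the coefficients (via the resultant, with attention to degenerate cases) is a detail the paper leaves implicit but does not change the argument.
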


\begin{proof}
    For each $n$, let $\sigma_n$ be the formula
	\[ \forall a_0,\ldots,a_n \forall r \exists x\ \big( r\neq 0 \wedge a_n\neq 0 \wedge (a_nx^n + \ldots + a_0\text{ is separable}) \ \to\ v(a_nx^n + \ldots + a_0 ) > v(r) \big)  .\]
	In other words, if $(K,v)$ is a valued field then $\sigma_n$ asserts that for all $\gamma\in vK$ and all separable polynomials $f(X)\in K[X]$ of degree $n$, there exists $x\in K$ with $v(f(x)) > \gamma$. Then the conjunction $\bigwedge_{n=1}^\infty \sigma_n$ is a first-order restatement of Proposition \ref{prop-0.1}(5), which is equivalent to $K$ being dense in its algebraic closure with respect to any extension of $v$. Thus, the desired axiomatization is the union of the axioms for valued fields with $\{\sigma_n : n\in\N\}$.
\end{proof}

Discussing density in a first order way requires adding the valuation to the language, as in the proposition above. In general, IAC and VAC are not first order properties in the language of rings. For example, $\R$ is both IAC and VAC by Proposition \ref{realex}, but every real closure of $\R(t)$ is non-archimedean, and hence neither IAC nor VAC by Theorem \ref{realthm}.

One way to interpret this is that $\R$ is only IAC because it is a small model of its theory. Similar issues arise with the definitions of minimality and $P$-minimality; there are small structures that are minimal, for example, but have elementary extensions that are not. We avoid cases like this in the same way as those classes:

\begin{defn}
    We say that a field $K$ is strongly IAC if every field elementarily equivalent to $K$ (in the language of rings) is IAC.
\end{defn}

\begin{theorem} \label{strongiac}
    Let $K$ be a strongly IAC field, and fix a distinguished valuation ring $\cO$ of $K$. Then $(K,\cO)$ is dense in its algebraic closure.
\end{theorem}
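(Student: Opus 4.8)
The plan is to pass to a sufficiently saturated model, use strong IAC to know that it is IAC, and then show that a saturated IAC valued field is automatically dense in its algebraic closure; density then descends to $(K,\cO)$ because, by the preceding proposition, density in the algebraic closure is axiomatizable in $\Ldiv$. Concretely, I would let $(K^*,\cO^*)$ be an $\aleph_1$-saturated elementary extension of $(K,\cO)$ in $\Ldiv$, write $v^*$ for the associated valuation, and note that $K^*\equiv K$ as pure fields, so $K^*$ is IAC by the hypothesis that $K$ is strongly IAC. Since density is $\Ldiv$-elementary and $(K,\cO)\preceq(K^*,\cO^*)$, it suffices to prove that $(K^*,v^*)$ is dense in $(K^*)^\alg$.

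To prove density in the saturated model, I would first dispose of the easy case using Lemma~\ref{nomax}: if $\cO_{v^*}$ has no maximum non-trivial coarsening then, since $K^*$ is IAC, every coarsening has algebraically closed residue field, and density is immediate. So assume $\cO_{v^*}$ has a maximum non-trivial coarsening, and let $v_0$ be the associated valuation. By maximality the corresponding convex subgroup $\Delta\leq v^*K^*$ is a maximal proper convex subgroup, so the value group $v_0K^*=v^*K^*/\Delta$ has no proper non-trivial convex subgroups and is therefore archimedean. This is the key point: even though $v^*K^*$ need not be archimedean, the value group of this particular coarsening is rank $1$ regardless of saturation, and so embeds in $\R$ and has countable cofinality. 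Because $v_0$ and $v^*$ induce the same topology, it is enough to show that $K^*$ is dense in $(K^*)^\alg$ with respect to $v_0$.

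The engine of the argument is the interplay between IAC and saturation. Fix $a\in(K^*)^\alg\setminus K^*$. Iterating Proposition~\ref{equiv}(3) for the valuation $v_0$ produces a sequence $b_0,b_1,\ldots\in K^*$ with $\zeta_n:=v_0(a-b_n)$ strictly increasing; this is a pseudo-Cauchy sequence with pseudo-limit $a$. If the $\zeta_n$ are cofinal in $v_0K^*$ we are done for this $a$. Otherwise $\sup_n\zeta_n$ is bounded, and since $v_0K^*$ is rank $1$ I may take the $\zeta_n$ to increase to this supremum along a countable cofinal sequence in the achievable set $\{v_0(a-b):b\in K^*\}$. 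Setting $\gamma_n:=v^*(a-b_n)$, the relation $\zeta_{n+1}>\zeta_n$ forces $\gamma_{n+1}>\gamma_n$ and $v^*(b_{n+2}-b_{n+1})=\gamma_{n+1}$, so the balls $B(b_{n+1},\gamma_n)=\{x:v^*(x-b_{n+1})>\gamma_n\}$ form a nested, nonempty, \emph{countable} chain (each contains $a$). By $\aleph_1$-saturation of $(K^*,v^*)$ their intersection contains some $b^*\in K^*$, whence $v_0(a-b^*)\geq\zeta_n$ for every $n$ and so $v_0(a-b^*)\geq\sup_n\zeta_n$. A final application of Proposition~\ref{equiv}(3) to $a-b^*$ then yields an element of $K^*$ whose $v_0$-distance to $a$ strictly exceeds $\sup_n\zeta_n$, contradicting that the $\zeta_n$ were cofinal in the achievable set. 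Hence that set is cofinal in $v_0K^*$, which is exactly density at $a$.

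The step I expect to be the main obstacle is matching the degree of saturation to the cofinality of the relevant cut in the value group. Naively one wants to realize the pseudo-limit $a$ inside $K^*$, but the cut $\sup_n\zeta_n$ could in principle have uncountable cofinality, in which case $\aleph_1$-saturation would not suffice; and passing to a more saturated model does not obviously help, since elementary extensions of rank $1$ valued fields need not remain rank $1$. The resolution I would emphasize is that working with the \emph{maximal} coarsening $v_0$ keeps its value group archimedean---hence of countable cofinality---inside the saturated model itself, so that $\aleph_1$-saturation is genuinely enough; the only remaining care is the translation of the $v_0$-pseudo-Cauchy data into a nested chain of balls for the distinguished valuation $v^*$, which is where the computation $v^*(b_{n+2}-b_{n+1})=\gamma_{n+1}$ is used.
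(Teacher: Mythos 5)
Your proof is correct, but it takes a genuinely different route from the paper's. Both arguments share the same frame: pass to an elementary extension in $\Ldiv$, use strong IAC to conclude the extension is IAC, prove density there, and descend to $(K,\cO)$ via the axiomatizability of density. The difference is in how the possible maximum non-trivial coarsening is handled. The paper avoids your case analysis entirely: it takes the union $\K'$ of an $\omega$-chain $\K_0\preceq\K_1\preceq\cdots$ in which each $\K_{n+1}$ is $|K_n|^+$-saturated, so that each $vK_n$ lies in a proper convex subgroup of $vK_{n+1}$; the value group of $\K'$ is then an increasing union of proper convex subgroups with no maximum, hence $\cO'$ has no maximum non-trivial coarsening and Lemma \ref{nomax} applies in one stroke. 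You instead work in a single $\aleph_1$-saturated extension and, when a maximum non-trivial coarsening $v_0$ does exist, run a direct pseudo-Cauchy argument; the two points that make this work --- that $v_0K^*$ is archimedean and hence every subset of it (or of its divisible hull) has countable cofinality, and that the nested balls can be written with parameters from $K^*$ because $v^*(b_{n+2}-b_{n+1})=\gamma_{n+1}$ --- are exactly the ones you isolate, and the argument is sound. What the paper's construction buys is brevity and the elimination of all cofinality bookkeeping; what yours buys is a cleaner standalone intermediate statement (any $\aleph_1$-saturated valued field whose underlying field is IAC is dense in its algebraic closure), essentially a saturated-model version of the rank-one reduction in Proposition \ref{criterion}.
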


\begin{proof}
    Consider a chain $\K = \K_0\preceq \K_1\preceq \ldots$ of elementary extensions $\K_n = (K_n,\cO_n)$ of $\K = (K,\cO)$ such that each $\K_{n+1}$ is $|K_n|^+$-saturated. Then $\K' = \bigcup_n \K_n$ is an elementary extension of $\K$ with valuation ring $\cO' = \bigcup_n \cO_n$; write $v$ for the valuation corresponding to $\cO'$ as well as its restriction to each $K_n$. Then for each $n$, $\K_n$ contains a realization of the partial type $\{v(x) > v(a) : a\in K_{n-1}\},$ and so there is a proper convex subgroup $\Delta_n$ of $vK_n$ which contains $vK_{n-1}$.
	
	By taking the convex hull of each $\Delta_n$ in $vK'$, we obtain an infinite chain of convex subgroups of $vK'$ with no maximum element. The union of this chain is $vK'$, and so the correspondence between convex subgroups and coarsenings tells us that $\cO'$ does not have a maximum non-trivial coarsening. Thus $(K',\cO')$ is dense in its algebraic closure by Lemma \ref{nomax}, and so by elementary equivalence, $(K,\cO)$ is dense in $K^\alg$.
\end{proof}

\begin{cor} \label{strongly-vac}
    Every strongly IAC field is VAC.
\end{cor}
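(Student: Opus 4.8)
The plan is to derive Corollary \ref{strongly-vac} directly from Theorem \ref{strongiac} together with the VAC criterion established in Proposition \ref{criterion}. The key observation is that the definition of strongly IAC quantifies over \emph{all} fields elementarily equivalent to $K$, and in particular the property of being strongly IAC is itself preserved under elementary equivalence: if $K' \equiv K$ then any $K'' \equiv K'$ also satisfies $K'' \equiv K$, hence $K''$ is IAC, so $K'$ is strongly IAC as well. This means I may freely apply Theorem \ref{strongiac} not just to $K$ but to every field elementarily equivalent to it.

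First I would fix a strongly IAC field $K$. Since $K$ is IAC (taking the trivial elementary extension of itself), I would invoke Proposition \ref{criterion} and reduce the goal to showing that $K$ is dense in $K^\alg$ with respect to every rank $1$ valuation $v$ on $K$ — equivalently, that the completion of $(K,v)$ is algebraically closed. Each such valuation $v$ corresponds to a valuation ring $\cO$ of $K$, so $(K,\cO)$ is a valued field in the language $\Ldiv$. Then I would apply Theorem \ref{strongiac} to this distinguished valuation ring: because $K$ is strongly IAC, the theorem yields immediately that $(K,\cO)$ is dense in its algebraic closure. As $\cO$ was an arbitrary (rank $1$, or indeed arbitrary) valuation ring on $K$, this gives density with respect to every valuation on $K^\alg$, which is exactly the definition of VAC.

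In fact the argument is even cleaner than passing through the rank $1$ reduction: Theorem \ref{strongiac} already establishes density in the algebraic closure for \emph{any} distinguished valuation ring $\cO$, without any rank restriction, so I can simply quantify over all valuation rings $\cO$ of $K$ directly. For each choice of $\cO$, Theorem \ref{strongiac} gives that $(K,\cO)$ is dense in $K^\alg$ with respect to the corresponding valuation $v$; by the equivalence of conditions (3) and (4) in Proposition \ref{prop-0.1}, density depends only on $v$ and not on its extension to $K^\alg$, so $K$ is dense in $K^\alg$ with respect to every extension of every valuation on $K$. Since every valuation on $K^\alg$ restricts to a valuation on $K$, this covers all valuations on $K^\alg$, and therefore $K$ is VAC.

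I do not expect any genuine obstacle here, as essentially all of the work has already been done in Theorem \ref{strongiac}; the corollary is a matter of unwinding definitions. The only point requiring a moment's care is that the definition of VAC quantifies over valuations on $K^\alg$, whereas Theorem \ref{strongiac} is phrased in terms of a distinguished valuation ring on $K$. This gap is bridged precisely by Proposition \ref{prop-0.1}, which guarantees that density in $(K^\alg,w)$ is insensitive to which extension $w$ of a given valuation on $K$ one chooses, so ranging over valuation rings of $K$ suffices to reach every valuation on $K^\alg$ after restriction.
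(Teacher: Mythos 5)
Your proposal is correct and, in its final streamlined form, is exactly the paper's argument: apply Theorem \ref{strongiac} to each valuation ring $\cO$ of $K$ to get density in $K^\alg$ with respect to every valuation, which (via the independence of density from the choice of extension, Proposition \ref{prop-0.1}) is precisely the definition of VAC. The initial detour through Proposition \ref{criterion} is unnecessary, as you yourself observe.
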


\begin{proof}
    By the theorem, if $K$ is strongly IAC then it is dense in its algebraic closure with respect to every valuation, and hence is VAC.
\end{proof}

As mentioned in the introduction, Krupi\'nski has shown that every superrosy field of positive characteristic is IAC, and hence all such fields are strongly IAC. Then by Theorem \ref{strongiac}, every superrosy field of positive characteristic is VAC.

There are a number of relationships, both proven and conjectured, between the algebraic and model theoretic structures of fields. Two well-known conjectures are:

\begin{conj} 
	Every simple field is pseudo-algebraically closed.
\end{conj}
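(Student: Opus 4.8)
The plan is to prove the conjecture by extracting, directly from the assumption that $\operatorname{Th}(K)$ is simple, the two features that characterise a bounded pseudo-algebraically closed field: a small absolute Galois group and the existence of rational points on every absolutely irreducible variety. Recall (see \cite{FJ08}) that $K$ is PAC if and only if $K$ is existentially closed in every regular field extension $L/K$, so the target naturally splits into a \emph{boundedness} statement about the separable closure and an \emph{existential closedness} statement, and I would attack these two halves separately before combining them at the end.

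First I would show that the absolute Galois group of $K$ is small, i.e.\ that $K$ has only finitely many separable extensions of each finite degree. The natural tool is the tree property: a field admitting, for arbitrarily large branching, many pairwise linearly disjoint Galois extensions of a fixed degree ought to let one encode a tree of consistent-but-inconsistent instances of a single formula built from the associated splitting and norm equations, contradicting simplicity. Making this encoding uniform and genuinely first-order — so that the branching of the tree is definable from the field structure alone — is the first technical hurdle, but it is of the same flavour as the existing arguments restricting the Galois theory of simple and stable fields.

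The harder half is existential closedness in regular extensions, and this is where I expect the real difficulty to concentrate. Given a regular extension $L/K$ and a solution $\bar a\in L^m$ to a finite system of polynomial equations and inequations over $K$, the goal is to produce such a solution already in $K$. Working in a monster model, I would realise $L$ inside an elementary extension $K^\ast$ of $K$ (so $K\preceq K^\ast$) and apply the \emph{independence theorem} for simple theories, amalgamating the type of $\bar a$ with $K$ itself; regularity of $L/K$ is precisely the algebraic input ensuring $\bar a$ is forking-independent from $K$ over the prime field, which is what licenses the amalgamation. The output is a type, and the boundedness established above should force the finite covers arising in the construction to be defined over $K$, so that the amalgamated type specialises to an honest $K$-rational point.

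The main obstacle — and the reason the conjecture is still open — is exactly this last translation from a model-theoretic amalgamation into an algebraic-geometric rational point. Forking-independence in an arbitrary simple field need not coincide with the algebraic notion of freeness that controls rational points, and there is at present no mechanism forcing a general simple field to witness solutions of absolutely irreducible varieties: a hypothetical counterexample would be a simple field whose independence relation is too abstract to see the geometry. I would therefore expect any complete proof to require either a dichotomy pinning down the independence relation on fields, in the spirit of the group and field configuration machinery available in the stable and supersimple settings, or a new invariant measuring the failure of PAC that can be shown to collapse under simplicity. It is the absence of such a tool in the general simple case — as opposed to the supersimple case, where rank-theoretic arguments are available — that separates this conjecture from its known partial results.
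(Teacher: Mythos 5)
This statement is an open conjecture, and the paper does not prove it either: it is stated only as motivation, and the paper's contribution is to propose a \emph{decomposition} of it (via Corollary \ref{strongly-vac}) into the two sub-conjectures that every simple field is strongly IAC and that every simple VAC field is pseudo-algebraically closed. Your proposal is therefore not comparable to a proof in the paper, and, to your credit, you do not actually claim to have one --- your final paragraph concedes that the central step is missing. But since the task is to assess the argument as a proof, I have to record that it has genuine gaps at both of the places where it matters. First, the boundedness half: it is not known that simple fields are bounded, and your sketch of encoding unboundedly many pairwise linearly disjoint Galois extensions of fixed degree into an instance of the tree property is not a construction --- the known arguments of this flavour (e.g.\ for supersimple fields, or the Artin--Schreier closure results for NIP and simple fields) rely on additional structure (ranks, or the specific shape of Artin--Schreier and Kummer extensions) that you do not supply, and no uniform first-order encoding of ``many linearly disjoint extensions'' witnessing the tree property is currently available for arbitrary finite degrees. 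Second, the existential closedness half: as you say yourself, the independence theorem amalgamates types with respect to forking independence, and there is no known bridge from forking independence in an abstract simple field to the algebraic linear disjointness/regularity needed to produce a rational point on an absolutely irreducible variety. Naming the obstacle is not the same as overcoming it.

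It is worth contrasting your intended route with the paper's. You follow the classical decomposition of PAC into boundedness plus existential closedness in regular extensions; the paper instead interposes the valuation-theoretic classes IAC and VAC, proving unconditionally (Theorem \ref{strongiac} and Corollary \ref{strongly-vac}) that strongly IAC implies VAC, so that the conjecture reduces to ``simple implies strongly IAC'' plus ``simple and VAC implies PAC.'' The paper's intermediate target is strictly weaker than boundedness or existential closedness and is already known to hold in the superrosy positive-characteristic case, which is precisely the kind of partial progress your sketch cannot deliver. If you want to pursue this conjecture, aligning with the paper's decomposition --- in particular trying to show that simple fields are strongly IAC, i.e.\ that every model of the theory has divisible value groups and algebraically closed residue fields for all valuations --- is a more tractable first step than attacking boundedness or the independence-theorem translation head on.
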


\begin{conj} 
	Every stable field is separably closed.
\end{conj}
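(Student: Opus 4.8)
This is the \emph{stable fields conjecture}, a longstanding open problem, so what follows is a program built on the machinery of this paper rather than a complete argument; I will be explicit about where the genuine difficulty lies. Following Hong's suggestion, the plan is to use VAC as an intermediate step, and the target splits into two tasks: first, show that every infinite stable field is VAC; second, bridge from VAC to separable closure.

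For the first task, I would aim to show that every stable field $K$ is in fact strongly IAC, which by Corollary~\ref{strongly-vac} immediately yields VAC. This is the stable analogue of Krupiński's theorem \cite{Krup15} that every superrosy field of positive characteristic is IAC, and because stability is preserved under elementary equivalence, a proof that each field elementarily equivalent to $K$ is IAC would give strongly IAC for free. The mechanism is that stability sharply constrains the structure a valuation can carry: heuristically a nontrivial, definably ordered value group would import the order property, pushing value groups toward divisibility, while the residue field inherits model-theoretic tameness and should, like $K$ itself, be algebraically closed; passing to saturated elementary extensions (as in the proof of Theorem~\ref{strongiac}) is what lets one handle valuations that are not outright definable. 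In positive characteristic there is a second, more concrete route: stability implies NIP, so by Kaplan--Scanlon--Wagner \cite{KSW} the field $K$ is Artin-Schreier closed, and then Theorem~\ref{asc} upgrades IAC to VAC directly, while Proposition~\ref{no-asc} confirms that no Artin-Schreier defect extensions survive.

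The main obstacle is the second task, and it is exactly the point at which the present framework stops short. Proposition~\ref{no-henselian} shows that a VAC field is separably closed \emph{provided} it carries a nontrivial henselian valuation, but the same proposition exhibits VAC as otherwise orthogonal to henselianity, so being VAC alone cannot produce separable closure. Arguing by contradiction, suppose $K$ is stable, VAC, and not separably closed; then Proposition~\ref{no-henselian} tells us that $K$ has \emph{no} nontrivial henselian valuation at all. To reach a contradiction one would therefore need the reverse input, namely that a non-separably-closed stable field must admit a nontrivial (ideally definable) henselian valuation whose existence clashes with this conclusion.

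That reverse input is precisely a henselianity conjecture of the kind that remains open for NIP and stable fields, and it is not delivered by the IAC/VAC circle of ideas. The honest assessment, then, is that this paper reduces the stable fields conjecture (granting the first task) to the problem of producing a nontrivial henselian valuation on any hypothetical non-separably-closed stable field: the residue-field and value-group analysis is where the paper's tools apply cleanly, whereas the existence of the henselian valuation is the hard, genuinely open step.
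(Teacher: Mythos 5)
This statement is an open conjecture (the stable fields conjecture), and the paper offers no proof of it; you correctly recognize this. Your proposed decomposition --- show stable fields are strongly IAC, hence VAC by Corollary \ref{strongly-vac}, and then bridge from VAC to separably closed --- is exactly the paper's own Conjecture 5.8, which the paper observes is equivalent to this conjecture via Corollary \ref{strongly-vac}, and your identification of the VAC-to-separably-closed step (equivalently, producing a henselian valuation on a hypothetical counterexample) as the genuinely hard part is consistent with the paper's discussion.
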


Recently, there has been a large amount of focus and progress towards a related conjecture about strongly dependent fields, originally proposed in \cite{She14}; the paper \cite{HHJ18} discusses all three conjectures and some of the relationships between them.

As we know that all pseudo-algebraically closed fields and all separably closed fields are VAC, we could break Conjectures 5.5 and 5.6 into smaller, more manageable pieces:

\begin{conj}
	Let $K$ be a field whose theory is simple. Then
	\begin{enumerate}
		\item $K$ is strongly IAC, and
		\item If $K$ is VAC then $K$ is pseudo-algebraically closed.
	\end{enumerate}
\end{conj}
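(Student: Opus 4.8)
Because this statement is a conjecture rather than a theorem, I can only propose a strategy and indicate where it breaks down; the difficulty of the two obstacles below is precisely what keeps the statement open. For part (1), the first reduction is that simplicity is a property of the complete theory, so if $K$ is simple then so is every $K'\equiv K$. It therefore suffices to prove the single assertion that \emph{every} simple field is IAC: applying this to every model of $\mathrm{Th}(K)$ gives that $K$ is strongly IAC, and Corollary \ref{strongly-vac} then upgrades strongly IAC to VAC for free. So the plan is to fix an arbitrary simple field $K$ together with a non-trivial valuation $v$ and to show $Kv$ is algebraically closed and $vK$ divisible, following the template of Krupi\'nski's proof that every superrosy field of positive characteristic is IAC \cite{Krup15}. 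The strategy there is a contradiction argument: a non-divisible value group or a non-algebraically-closed residue field should be reflected in a definable family in the pure field whose complexity is incompatible with the ambient theory. In positive characteristic there is extra leverage, since simple fields are Artin--Schreier closed by \cite{KSW}; once IAC is established in that case, Theorem \ref{asc} even yields VAC directly without passing through strongly IAC.

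The main obstacle in (1) is that an arbitrary valuation is neither definable nor type-definable, so one cannot simply read off a definable set of the wrong complexity from $v$. Krupi\'nski's argument uses the ordinal rank that a superrosy theory provides in an essential way, descending on it to force a contradiction; a simple theory supplies only forking independence and the independence theorem, with no such ordinal rank available. Converting ``failure of IAC at some valuation'' into a genuine violation of simplicity --- presumably by reconstructing a coarsening or a suitable definable subgroup from the valuation data and then controlling its forking behaviour --- is the crux, and I do not see how to carry it out in the general simple setting.

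For part (2), the goal is a conditional converse to the known implication that pseudo-algebraically closed fields are VAC \cite[Proposition 11.5.3]{FJ08}. Since being pseudo-algebraically closed is equivalent to every absolutely irreducible variety over $K$ having a $K$-rational point (equivalently, $K$ being existentially closed in each of its regular extensions), the plan is to begin from VAC, which guarantees approximate solutions in every valuation topology, and to upgrade those approximations to exact rational points using simplicity; the natural tool is the independence theorem, amalgamating generic approximate solutions arising from different valuations into a single generic point of the variety. The hard part, and the reason (2) is only a conjecture, is that density in $K^\alg$ is a purely topological approximation statement at each valuation, with no a priori mechanism forcing an exact point to exist; simplicity must supply the missing arithmetic input, and bridging this gap is exactly the content of the simple fields conjecture restricted to VAC fields. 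I would expect (2), rather than (1), to be the decisive obstacle, precisely because it is where the existence of rational points --- as opposed to mere approximation --- has to be produced.
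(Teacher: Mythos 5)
The statement you were asked about is one of the paper's closing conjectures, and the paper offers no proof of it: it is presented as an open problem, with the only accompanying remark being that it is equivalent to the full simple fields conjecture (every simple field is pseudo-algebraically closed) via Corollary \ref{strongly-vac}. You correctly recognized this, and your reduction of part (1) to the single claim that every simple field is IAC, together with the observation that strongly IAC implies VAC, is exactly the equivalence the paper itself points out; your identification of part (2) as the genuinely arithmetic obstacle is also consistent with how the paper frames the decomposition. One small caveat in your sketch: the result of \cite{KSW} for simple fields of positive characteristic is that they have only \emph{finitely many} Artin--Schreier extensions, not that they are Artin--Schreier closed (the latter is their theorem for infinite NIP fields), so the proposed shortcut through Theorem \ref{asc} in the positive characteristic simple case would still require closing that gap.
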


\begin{conj}
	Let $K$ be a field whose theory is stable. Then
	\begin{enumerate}
		\item $K$ is strongly IAC, and
		\item If $K$ is VAC then $K$ is separably closed.
	\end{enumerate}
\end{conj}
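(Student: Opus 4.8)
These two items together would settle the full stable fields conjecture: by Corollary~\ref{strongly-vac}, item (1) already forces $K$ to be VAC, and item (2) then upgrades VAC to separably closed. The plan is to attack the two items by quite different means, isolating the valuation-theoretic approximation content (VAC) in (1) from the henselianity content in (2). Since elementary equivalence preserves stability, the assertion that every stable field is IAC is equivalent to the assertion that every stable field is strongly IAC; so for (1) it suffices to prove that an arbitrary stable field is IAC, with no distinguished valuation in sight.

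For (1), I would fix a non-trivial valuation $v$ on a stable field $K$ and try to show directly that $Kv$ is algebraically closed and $vK$ is divisible. In positive characteristic I would attempt to adapt Krupi\'nski's proof that superrosy fields are IAC \cite{Krup15}, carrying the rank arguments on definable subgroups of $K^\times$ and $K^+$ through under stability alone; once IAC is in hand, stability implies NIP and hence Artin--Schreier closedness \cite{KSW}, so Theorem~\ref{asc} immediately yields VAC. In characteristic zero the natural route is Theorem~\ref{divisible}, which demands that $K^\times$ be divisible for stable $K$ --- a statement uncomfortably close to the conclusion we are chasing --- so genuinely new input appears to be needed there.

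For (2), I would reduce to a henselianity statement. By Proposition~\ref{no-henselian}, any VAC field carrying a non-trivial henselian valuation is separably closed, so it is enough to show that an infinite stable field that is not separably closed admits a non-trivial henselian valuation. This is precisely the stable instance of the henselianity classification conjectured for NIP fields --- that each is separably closed, real closed, or henselian --- with the real closed case ruled out because real closed fields are unstable. Granting this instance, a stable VAC field failing to be separably closed would still be henselian, contradicting Proposition~\ref{no-henselian}; hence (2) follows.

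The main obstacle is (1): proving IAC for an \emph{arbitrary}, non-definable valuation on a stable field using only stability is exactly the type of statement whose absence has blocked the stable fields conjecture to date. The reduction in (2) is cleaner but merely trades one open problem for another, since the henselianity classification for stable --- let alone NIP --- fields is itself unresolved. What the decomposition buys is organizational clarity: it cleanly separates the approximation property, where Krupi\'nski-style arguments together with Theorems~\ref{asc} and~\ref{divisible} already provide real traction, from the henselianity input required to turn approximation into separable closedness.
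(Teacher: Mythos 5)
The statement you were given is a conjecture, and the paper does not prove it: the only thing the paper establishes about it is that, by Corollary \ref{strongly-vac}, the two items taken together are equivalent to the full stable fields conjecture (every stable field is separably closed), the decomposition being offered purely as an organizational device. Your proposal correctly reproduces that equivalence and your two reductions are sound as far as they go: item (1) does reduce to showing that every stable field is IAC (since stability passes to elementarily equivalent fields), and in positive characteristic IAC together with Artin--Schreier closedness of infinite NIP fields \cite{KSW} would indeed yield VAC via Theorem \ref{asc}; likewise item (2) does reduce, via Proposition \ref{no-henselian}, to the stable instance of the henselianity conjecture, with the real closed alternative correctly excluded by instability of orders. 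But neither reduction terminates in anything proved: Krupi\'nski's argument relies on superrosiness, which stability does not supply (separably closed non-perfect fields are stable but not superstable, so the rank machinery is unavailable); the characteristic-zero half of (1) has no route beyond Theorem \ref{divisible}, whose divisibility hypothesis is essentially the kind of statement one is trying to establish; and the henselianity classification for stable (or NIP) fields is itself open. You acknowledge all of this, so what you have written is an accurate research plan rather than a proof. Since the paper likewise offers no proof, there is no gap to fault you for beyond the one inherent in the statement itself: both items remain open, and the content of the conjecture is exactly the separation you describe, isolating the approximation-theoretic part (item (1), where Theorems \ref{asc} and \ref{divisible} give partial traction) from the henselianity input needed to convert density into separable closedness (item (2)).
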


By Corollary \ref{strongly-vac}, Conjectures 5.5 and 5.7 are equivalent, as are Conjectures 5.6 and 5.8. However, the smaller components above may be easier to prove than the full conjecture.

\bibliographystyle{alpha}
\bibliography{../References}

\end{document}